\numberwithin{equation}{section}
\theoremstyle{plain}
\newtheorem{theorem}{Theorem}[section]
\newtheorem{lemma}[theorem]{Lemma}
\newtheorem{proposition}[theorem]{Proposition}
\theoremstyle{definition}
\newtheorem{remark}[theorem]{Remark}
\def\beqn{\begin{equation}}
\def\beqn*{$$}
\def\eeqn{\end{equation}}
\def\P{\mathbb{P}}
\def\E{\mathbb{E}}
\newcommand{\bbn}{{\mathbb N}}
\newcommand{\vep}{\varepsilon}
\newcommand{\one}{{\mathbbm 1}}
\newcommand{\remove}[1]{}
\newcommand{\bp}{{\bf p}}
\newcommand{\calW}{\mathcal{W}}
\newcommand{\calH}{\mathcal{H}}
\begin{document}

\bibliographystyle{plainnat}

\title[Large deviations for subcomplex counts]
{Large deviations for subcomplex counts and Betti numbers in multi-parameter simplicial complexes}
\author{Gennady Samorodnitsky}
\address{School of Operations Research and Information Engineering\\
Cornell University \\
NY, 14853, USA}
\email{gs18@cornell.edu}
\author{Takashi Owada}
\address{Department of Statistics\\
Purdue University \\
IN, 47907, USA}
\email{owada@purdue.edu}

\thanks{Samorodnitsky's research is partially supported by the
  NSF grant DMS-2015242  at  Cornell  University. Owada's research is partially supported by the NSF grant, 
  DMS-1811428 at Purdue University. }

\subjclass[2010]{Primary 60F17. Secondary 55U05, 60C05, 60F15. }
\keywords{Large deviations, multi-parameter simplicial complex, Betti number}

\begin{abstract}
 
  \noindent
We consider the multi-parameter random simplicial complex as a higher
dimensional extension of the classical Erd\"os-R\'enyi graph. We
investigate appearance of  ``unusual" topological structures in the
complex from the point of view of large deviations. We first study upper tail large deviation probabilities for subcomplex counts, deriving the order of magnitude of such probabilities at the logarithmic scale precision. The obtained results are then applied to analyze large deviations for the number of simplices at the critical dimension and below. Finally, these results are also used to deduce large deviation estimates for Betti numbers of the complex in the critical dimension. 
 
\end{abstract}

\maketitle

\section{Introduction}  \label{s:introduction}
One can view a simplicial complex as a network with connections
potentially 
involving more than 2 vertices at a time. Given a set $V$ of vertices,
an undirected 
graph allows for the existence of only edges of the type $(v_1,v_2)$ for
$v_1,v_2$ in $V$, while potentially higher-dimensional ``edges'' would
have the form $(v_1,\ldots, v_k)$ with $k\geq 2$ for $v_1,\ldots, v_k$
in $V$. If $k>2$, this is a hyperedge, which is not allowed in a graph
but is allowed in a hypergraph. A simplicial complex is a
special kind of a hypergraph, in which a subset of hyperdge is itself
a hyperedge. That is, if $(v_1,\ldots, v_k)$ with $k> 2$ is a
hyperedge, then so is 
the collection of $k-1$ vertices obtained by removing from
$(v_1,\ldots, v_k)$ any one of its $k$ vertices. When describing a
simiplical complex, one typically says that $(v_1,\ldots, v_k)$ forms
a $(k-1)$-dimensional simplex (henceforth we call it a $(k-1)$-simplex), and not a hyperedge.

In a random simplicial complex, the simplices of different dimensions are
added according to a randomized rule. Some of the models of random
simplicial complexes are 
extensions of the classical Erd\"os-R\'enyi random graph, in which  
potential edges between two vertices are formed with  probability $p$, 
independently of other potential edges. Possible rules of constructing
a random simplicial complex include the \emph{flag complex} (also known as
the clique complex), in which a potential $k$-simplex is
formed whenever a set of $k+1$ vertices constitutes a  clique in the
Erd\"os-R\'enyi graph (see e.g., \cite{kahle:2009}).   
\emph{The Linial-Meshulam-Wallach complex} of a fixed maximal dimension $k$
is a random simplicial complex, in which all of the $(k-1)$-simplices are  present with probability $1$, while the potential $k$-simplices are included with probability $p$, independently of other $k$-simplices (\cite{linial:meshulam:2006, meshulam:wallach:2009}).  
The most general model in this direction is 
\emph{the Costa-Farber multi-parameter simplicial complex}, for which
potential simplices are added inductively in their dimensions; for
every $k=1,2,\dots$, each potential $k$-simplex is included with
probability $p_k$, independently of other simplices, only when all of its $(k-1)$-faces are present (see
\cite{costa:farber:2016, costa:farber:2017}). 

The randomness of the  simplicial complexes induces
randomness on the topological structure of the complex and its
topological invariants, such as 
\emph{the Betti numbers} and \emph{the Euler characteristic}. The distributions of
 topological invariants have been a subject of recent interest for
various models of random simplicial complexes. This includes the existence
of a dominating dimension and central limit theorems for the Euler
characteristic; see e.g. \cite{kahle:meckes:2013,
  thoppe:yogeshwaran:adler:2016, fowler:2019, kanazawa:2022}. Functional
limit theorems for a dynamic version of the multi-parameter model have
been established in \cite{owada:samorodnitsky:thoppe:2021}.

However, the previous work   describes only the ``usual" topological
structure of  random simplicial complexes, in the sense of the
``average" behavior  and likely deviations from the ``average"
behavior of the topological invariants. 
In contrast, the  primary focus of this paper is in the situations when the
topological structure of the complex is less usual, in the sense of
a topological invariant being far away from the average. Such events
are, by definition, rare but may have an oversized impact on the
function of the network, and they are typically referred to as large
deviations events. 
Such events are often related to situations when 
 certain subcomplexes   appear significantly more or significantly
 less than expected. Understanding the
probabilities of such events is sometimes described as the upper (or
lower) tail large deviations problem for a
subcomplex count.


Within the context of the Erd\"os-R\'enyi random graphs, 
large deviation problems for subgraph counts have attracted much attention over the last decade in, among many others, \cite{chatterjee:varadhan:2011, chatterjee:dembo:2016, bhattacharya:ganguly:lubetzky:zhao:2017, lubetzky:zhao:2017, eldan:2018, yan:2020, cook:dembo:2020}. 
In particular, \cite{chatterjee:2017} gives a comprehensive presentation, covering various large deviation problems for the random graph of different degrees of denseness. Moreover, \cite{janson:oleszkiewicz:rucinski:2004} developed a general
framework for  the upper tail large deviation problems for subgraph counts in the random graph. 
The combinatorial part of this approach is based on a result in \cite{alon:1981}, and an extension to uniform subhypergraph counts in a random hypergraph setup is provided in \cite{dudek:polcyn:rucinski:2010}. 

The present work receives much inspiration from \cite{janson:oleszkiewicz:rucinski:2004} and addresses the upper tail large deviation problems for subcomplex counts in the multi-parameter random simplicial complex. 
Due to its high-dimensional topological structure, 
this problem is more involved than  the analogous problems for
random graphs. 
Although some of the results available for the Erd\"os-R\'enyi random
graph have not yet been fully extended to the multi-parameter random
simplicial complex, the results we obtain  are useful for
understanding certain rare events in the latter 
complex. For instance, we are able to describe  the upper tail large
deviations for the Betti numbers at the dominating dimensions.  

This paper is organized as follows. In Section \ref{sec:basics}, we
present a formal definition of  the multi-parameter random simplicial
complex.  
In Section \ref{sec:upper.tails}, we address 
general upper tail large deviation problem for subcomplex
counts. Section \ref{sec:simplices}  specializes to the large
deviation problem for the number of simplices at the
dominating dimension and below, under the setup of
\cite{owada:samorodnitsky:thoppe:2021}. Finally, Section
\ref{sec:betti} discusses the upper tail large deviations for  Betti numbers
at the dominating dimension.

\section{The multi-parameter random simplical complex and the  
  subcomplex count problem} \label{sec:basics}

In this section we formally construct the multi-parameter random
simplical complex introduced in \cite{costa:farber:2016,
  costa:farber:2017}. This complex  is a model of an abstract
simplicial complex on the alphabet $[n]=\{1, \ldots, n\}$,
parametrized by $\bp=\bp(n)=(p_1,\ldots, p_{n-1})\in [0,1]^{n-1}$ . 
A simplex
(or a face, or a word) in this complex is a nonempty collection of letters in
the alphabet, and the dimension of a simplex is equal to the number of
letters in the word minus 1. The $i$th skeleton of a complex is the 
subcomplex consisting of all faces of dimension $i$ or less. The
multi-parameter random simplicial complex is built recursively,
starting with the $[n]$ as the 0th skeleton. For $i=1,\ldots,
n-1$, once the $(i-1)$st
skeleton has been constructed, each of the potential $i$-simplices whose
boundary is in that $(i-1)$st skeleton, is added to the complex with
probability $p_i$, independently of other potential $i$-simplices. We
denote the obtained random complex by $K\big(n; \bp(n)\big)=K(n;p_1,\ldots, p_{n-1})$. 

We are interested in the subcomplex counting problem for the
multi-parameter random simplicial complex.
Given two simplicial complexes, $F_1$ and $F_2$, an
ordered copy of $F_1$ in $F_2$ is an injective simplicial map from
the vertex set of $F_1$ to the vertex set of $F_2$. In particular, this map has the 
property that the vertices of every simplex in $F_1$ are mapped
into the vertices of a simplex in $F_2$ of the same dimension. Similarly, an unordered
copy of $F_1$ in $F_2$ is a subcomplex $F_3$ of
$F_2$, which is isomorphic to $F_1$; that is, there is  a bijective mapping between 
the vertex set of $F_1$ and the vertex set  of $F_3$, such that a set of vertices forms a simplex in $F_1$  if and only if the
corresponding vertex set under this mapping forms a simplex in $F_3$. If
$n_o(F_2,F_1)$ and $n(F_2,F_1)$ are the numbers of
ordered and unordered copies of $F_1$ in $F_2$
correspondingly, then it is
clear that $n_o(F_2,F_1)/n(F_2,F_1)=\#({\rm Aut}(F_1))$,
the number of automorphisms of $F_1$ consisting of all permutations of
the vertices of $F_1$ preserving the complex. 

Let $F$ be a fixed simplicial complex of dimension $k\ge1$. For
$n>k$ and probabilities $\bp(n)$, we denote by $N_n(F)$ and $N_{o,n}(F)
$ the (random)
numbers of unordered and ordered copies of $F$ in the multi-parameter simplicial complex
$K(n;p_1,\ldots, p_{n-1})$, correspondingly.  Note that $p_i$ with $i>k$ do not affect
$N_n(F)$ and $N_{o,n}(F)$, so we may assume that
$p_i=0$ for $i>k$ and simply write the complex as $K(n;p_1,\ldots, p_k)$. If we
denote for $i=0,1,\ldots, k$, 
\begin{align} \label{e:number.simp}
  F_i =& \ \text{the set of $i$-simplices in $F$,} \\
 \notag s_i(F) =& \ \text{the number of $i$-simplices in $F$}, 
\end{align}
then
\begin{align} \label{e:mean.ordered}
&\mu_{o,n}(F) := \E [N_{o,n}(F)]= (n)_{s_0(F)}\prod_{i=1}^k
                                       p_i^{s_i(F)}, \\
 &\mu_{n}(F) := \E [N_{n}(F)]=
\mu_{o,n}(F)/\#({\rm Aut}(F)),  \notag 
\end{align}
where $(n)_{s_0(F)}:=n(n-1)\cdots \bigl( n-s_0(F)+1\bigr)$. 

As $n\to\infty$, the copies of $F$ can potentially be found in many
(nearly) independent parts of the multi-parameter simplicial complex
$K(n;p_1,\ldots, p_k)$, so one expects 
that for large $n$ enough, $N_n(F)$ and $N_{o,n}(F)$ do not deviate
``too much'' from their corresponding means. Therefore, the upper tail
large deviation probabilities
\begin{equation}
\label{e:uppertail.ld}
\P\bigl(N_{o,n}(F)\ge (1+\vep) \mu_{o,n}(F)\bigr), \ \ \vep>0, 
\end{equation}
and the lower tail large deviation probabilities 
\begin{equation}
\label{e:lowertail.ld}
\P\bigl(N_{o,n}(F)\le (1-\vep) \mu_{o,n}(F)\bigr), \ \ 0<\vep<1, 
\end{equation}
are expected to be exponentially small for large  $n$ enough. Our
subject of interest is to investigate  exactly how small these
probabilities are. In this paper we focus only on the upper tail large
deviations in \eqref{e:uppertail.ld}. An analysis of the lower tail
large deviations in \eqref{e:lowertail.ld} is postponed to a future
publication.  

\section{Upper tail large deviations} \label{sec:upper.tails}

Our approach to understanding the upper tail large deviations for
subcomplex counts is inspired by
\cite{janson:oleszkiewicz:rucinski:2004}.  
If $G$ is a fixed simplicial complex of dimension $k\ge1$, we denote by
$N(m_0,m_1,\ldots, m_k; G)$ the maximum of $n(F,G)$ taken over all
simplicial complexes $F$ with   $s_i(F)\leq m_i$, 
$i=0,1,\ldots, k$. Clearly, $N(m_0,m_1,\ldots, m_k; G)=0$ unless
$s_i(G)\leq m_i$, $i=0,1,\ldots, k$.

The number $N(m_0,m_1,\ldots, m_k; G)$ is often referred to as the \emph{extremal parameter} and is related  to a certain
linear optimization problem that we now describe. Using the notation
in \eqref{e:number.simp}, we consider the linear program
\begin{align} \label{e:linear.prog}
  &\max \sum_{v\in G_0}x_v \\
  &\text{subject to} \notag \\ 
    &0\le \sum_{v\in \sigma_i}x_v\leq \log m_i, \  \sigma_i\in G_i, \
    i=0,1,\ldots, k. \notag 
\end{align}
Denote by $\gamma=\gamma(m_0,m_1,\ldots, m_k;G)$ the optimal value of
this problem.

\begin{proposition} \label{prop:N.LP}
  Assume that $s_i(G)\leq m_i$, $i=0,1,\ldots, k$. Then, there are finite
  positive constants $c(G),C(G)$ that depend only on $G$, such that 
  \begin{equation} \label{e:both.b.N}
 c(G)e^{\gamma(m_0,m_1,\ldots, m_k;G)} \leq N(m_0,m_1,\ldots, m_k;G)\leq
 C(G)e^{\gamma(m_0,m_1,\ldots, m_k;G)}.  
\end{equation}
 \end{proposition}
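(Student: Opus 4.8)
The plan is to prove the two inequalities in \eqref{e:both.b.N} separately, after reducing to \emph{ordered} copies: since $n(F,G)=n_o(F,G)/\#(\mathrm{Aut}(G))$, it suffices to sandwich $\max_F n_o(F,G)$ — the maximum over simplicial complexes $F$ with $s_i(F)\le m_i$, $i=0,\dots,k$ — between constant multiples of $e^{\gamma}$. The structural point is that \eqref{e:linear.prog} and its linear‑programming dual encode, respectively, an explicit ``blow‑up'' complex realizing many copies of $G$ and a fractional cover of $G_0$ by simplices of $G$ that drives an entropy bound. Concretely, the $i=0$ constraints in \eqref{e:linear.prog} force $x_v\ge0$ for every $v\in G_0$, so the lower constraints $\sum_{v\in\sigma_i}x_v\ge0$ are redundant and \eqref{e:linear.prog} equals $\max\{\sum_{v\in G_0}x_v:\ x\ge0,\ \sum_{v\in\sigma_i}x_v\le\log m_i\ \text{for all }\sigma_i\in G_i\}$, whose dual is
\[
\gamma^*\;=\;\min\Bigl\{\sum_{\sigma}y_\sigma\log m_{\dim\sigma}\ :\ y_\sigma\ge0,\ \sum_{\sigma\ni v}y_\sigma\ge1\ \text{for all }v\in G_0\Bigr\},
\]
the sums running over all simplices $\sigma$ of $G$. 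Both programs are feasible ($x=0$; $y_{\{v\}}\equiv1$) and bounded, so $\gamma^*=\gamma$ by strong duality; moreover the dual feasible region is pointed and depends only on $G$, so the minimum is attained at one of its finitely many vertices $y^*$, and in particular $\sum_\sigma y^*_\sigma\le C_1(G)$ for a constant depending only on $G$.

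For the upper bound, fix $F$ with $s_i(F)\le m_i$ and assume $n_o(F,G)\ge1$. Let $\Phi$ be a uniformly chosen ordered copy of $G$ in $F$, identified with the random vector $(\Phi(v))_{v\in G_0}$, so $H(\Phi)=\log n_o(F,G)$. For each simplex $\sigma$ of $G$, the subvector $(\Phi(v))_{v\in\sigma}$ ranges over the ordered tuples of $\dim\sigma+1$ distinct vertices spanning a $(\dim\sigma)$‑simplex of $F$, of which there are at most $(\dim\sigma+1)!\,s_{\dim\sigma}(F)\le(k+1)!\,m_{\dim\sigma}$, whence $H\bigl((\Phi(v))_{v\in\sigma}\bigr)\le\log m_{\dim\sigma}+\log((k+1)!)$. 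Since $(y^*_\sigma)$ is a fractional cover of $G_0$ by simplices of $G$, the fractional version of Shearer's entropy inequality together with the previous bound yields
\[
\log n_o(F,G)=H(\Phi)\ \le\ \sum_\sigma y^*_\sigma\,H\bigl((\Phi(v))_{v\in\sigma}\bigr)\ \le\ \gamma^*+C_1(G)\log((k+1)!)\ =\ \gamma+C_1(G)\log((k+1)!),
\]
so $n(F,G)\le n_o(F,G)\le C(G)e^{\gamma}$ with $C(G)=\bigl((k+1)!\bigr)^{C_1(G)}$.

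For the lower bound I would construct a blow‑up of $G$. Let $x^*$ be optimal in \eqref{e:linear.prog}, set $\delta=\delta(G):=\max_{0\le i\le k}\log\bigl(2s_i(G)\bigr)$ and $A=\{v\in G_0:x^*_v\ge\delta\}$. Attach to each $v\in G_0$ a vertex class $V_v$, the classes pairwise disjoint, with $|V_v|=\lceil e^{x^*_v-\delta}\rceil$ when $v\in A$ and $|V_v|=1$ otherwise, and let $F$ be the complex whose simplices are the sets $W\subseteq\bigsqcup_v V_v$ that meet each $V_v$ in at most one vertex and whose support $\{v:\ W\cap V_v\neq\emptyset\}$ is a simplex of $G$. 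Downward closure of $G$ makes $F$ a genuine simplicial complex of dimension $k$, and its $i$‑simplices are precisely the transversals of the $i$‑simplices of $G$, so $s_i(F)=\sum_{\sigma\in G_i}\prod_{v\in\sigma}|V_v|$. Using $\lceil t\rceil\le2t$ for $t\ge1$, the nonnegativity of $x^*$, the constraint $\sum_{v\in\sigma}x^*_v\le\log m_i$ for $\sigma\in G_i$, and $2e^{-\delta}\le s_i(G)^{-1}$, one checks $\prod_{v\in\sigma}|V_v|\le m_i/s_i(G)$ for every $\sigma\in G_i$ (when $\sigma\cap A=\emptyset$ this reads $1\le m_i/s_i(G)$, which holds by hypothesis), hence $s_i(F)\le m_i$. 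On the other hand, mapping each $v\in G_0$ to an arbitrary element of $V_v$ gives an ordered copy of $G$ in $F$, so
\[
n_o(F,G)\ \ge\ \prod_{v\in G_0}|V_v|\ \ge\ \prod_{v\in A}e^{x^*_v-\delta}\ =\ e^{\sum_{v\in A}x^*_v-|A|\delta}\ \ge\ e^{\gamma-|G_0|\delta},
\]
using $x^*_v<\delta$ for $v\notin A$. Therefore $N(m_0,\dots,m_k;G)\ge n(F,G)\ge c(G)e^{\gamma}$ with $c(G)=e^{-|G_0|\delta(G)}/\#(\mathrm{Aut}(G))$.

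The entropy estimate and the LP duality are routine; the step demanding the most care is the lower‑bound construction, where the blow‑up must meet the \emph{exact} bounds $s_i(F)\le m_i$ rather than up to a constant — this is what forces the deflation of the classes by the factor $e^{-\delta(G)}$ and is precisely why one loses the $G$‑dependent factor $c(G)$ — together with the verification that passing to faces of transversals keeps $F$ downward closed. A smaller but genuine point, needed to get a clean statement, is that the fractional‑cover weight $\sum_\sigma y^*_\sigma$ must be bounded independently of $m_0,\dots,m_k$, which is what the passage to a vertex of the ($m$‑free) dual polyhedron secures.
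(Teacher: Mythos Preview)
Your proof is correct and follows essentially the same strategy as the paper: the lower bound via a blow-up of $G$ governed by a primal optimizer of \eqref{e:linear.prog}, and the upper bound via the dual fractional cover combined with a Shearer-type inequality. The only differences are in packaging---you invoke fractional entropy Shearer directly where the paper routes through a random partition, the combinatorial Friedgut--Kahn lemma with integer multiplicities, and a limit $t\to\infty$, and your threshold-based deflation $e^{-\delta(G)}$ replaces the paper's case analysis on whether $s_j(G)<m_j$ or $s_j(G)=m_j$---but the underlying argument is the same.
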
 
\begin{proof}
We first prove the lower bound in \eqref{e:both.b.N}.
Let $\bigl( x_v^*, \, v\in G_0\bigr)$ be an optimal solution to the
linear program \eqref{e:linear.prog}. 
 We construct a simplicial complex $F$ as
follows. Let $c>0$ be a small constant described in the sequel. 
We start with a family of disjoint sets $(V_v)_{v\in G_0}$, where $V_v$ consists of $n_v:=\lceil ce^{x_v^*}\rceil$ points for each $v\in G_0$.  
Define the vertices of $F$ to be the points in 
 the union $\bigcup_{v\in G_0}V_v$, i.e., we take $F_0=\bigcup_{v\in G_0}V_v$. 
Next, for every $j\in \{ 1,\dots,k \}$ and distinct vertices $v_1, \dots, v_{j+1}\in G_0$, a point set $(w_1,\dots,w_{j+1})\in \prod_{i=1}^{j+1}V_{v_i}$ forms a $j$-simplex in $F$ if and only if  
 the vertices
$v_i, \, i=1,\ldots, j+1$, form a $j$-simplex in $G$. 

We claim that if we choose a sufficiently small constant $c>0$ that depends only on $G$, 
 then it can be assured that
\begin{equation*}
s_j(F)\leq m_j, \ \ j=0,1,\ldots, k.
\end{equation*}
For this purpose, we use the constraints in \eqref{e:linear.prog}.
Consider first $s_0(F)$. Suppose first that $s_0(G)<m_0$;  then, 
\begin{align*}
  s_0(F) = \sum_{v\in G_0}\lceil ce^{x_v^*}\rceil
  \leq  \sum_{v\in G_0} \bigl( ce^{x_v^*}+1\bigr)
  \leq s_0(G) (cm_0+1) \leq m_0,
\end{align*}
if we choose $c$ to satisfy
$$
c\leq \frac{1}{s_0(G)(1+s_0(G))}.
$$
Suppose next that $s_0(G)=m_0$. In this case, $e^{x_v^*}\leq s_0(G)$ for
each $v\in G_0$, so choosing $c\leq 1/s_0(G)$ leads to
$$
s_0(F)= \sum_{v\in G_0}\lceil ce^{x_v^*} \rceil = s_0(G)=m_0.
$$
Similarly, for any $j=1,\ldots, k$, if $s_j(G)<m_j$, then 
\begin{align*}
s_j(F) = &\sum_{(v_1,\ldots, v_{j+1})\in G_j} \prod_{i=1}^{j+1} \lceil
  ce^{x_{v_i}^*}\rceil
  \leq \sum_{(v_1,\ldots, v_{j+1})\in G_j} \prod_{i=1}^{j+1}\bigl(
           ce^{x_{v_i}^*}+1\bigr) \\
 = &\sum_{(v_1,\ldots, v_{j+1})\in G_j}
     \bigg(1+\sum_{i=1}^{j+1}\sum_{A\subset\{1,\ldots, j+1\}, \#(A)=i}
     c^i \exp\Big\{\sum_{\ell\in A}x_{v_\ell^*}\Big\}\bigg) \\
  \leq &\sum_{(v_1,\ldots, v_{j+1})\in G_j} \bigg(1+\sum_{i=1}^{j+1}
         c^i{j+1\choose i} m_j\bigg) \\
  = &s_j(G) \Big( 1+ m_j\bigl( (1+c)^{j+1}-1\bigr)\Big). 
\end{align*}
Thus, if we choose $c$ to satisfy
$$
c\leq \left[\left(
    \frac{1}{s_j(G)(1+s_j(G))}+1\right)^{1/(j+1)}-1\right], 
$$
it then holds that $s_j(F)\le m_j$, as required. 
On the other hand, if $s_j(G)=m_j$, then for $(v_1,\dots,v_{j+1})\in G_j$, we have $e^{x_{v_i}^*} \le s_j(G)$, $i=1,\dots,j+1$; so choosing $c\le 1/s_j(G)$ again leads to  
$$
s_j(F)=\sum_{(v_1,\dots,v_{j+1})\in G_j} \prod_{i=1}^{j+1}\lceil
ce^{x_{v_i}^*} \rceil = s_j(G)=m_j. 
$$

For the simplicial complex $F$ 
constructed above, the number of ordered copies of $G$ in $F$ is at least
$$
\prod_{v\in G_0}n_v \geq c^{s_0(G)}\exp\Big\{ \sum_{v\in G_0}
  x_v^*\Big\}
= c^{s_0(G)} e^\gamma. 
$$
We thus conclude that
$$
  N(m_0,m_1,\ldots, m_k;G) \geq  N(F,G)\ge   \frac{c^{s_0(G)}}{\#({\rm
    Aut}(G))}e^\gamma,
$$
establishing the lower bound in \eqref{e:both.b.N}.

In order to prove the upper bound in \eqref{e:both.b.N}, we start with 
the dual problem to the optimization problem
\eqref{e:linear.prog}. It is the linear program
\begin{align} \label{e:dual.prog}
&\min\left[\sum_{v\in G_0} y_v  \log m_0+ \sum_{i=1}^k\sum_{\sigma_i\in G_i} z^{(i)}_{\sigma_i} \log
  m_i\right] \\
 &\text{subject to} \notag \\ 
&y_v+\sum_{i=1}^k\sum_{\sigma_i\in G_i \atop v\in
    \sigma_i}z^{(i)}_{\sigma_i} \geq 1 \ \ \text{for any $v\in
                                 G_0$,} \notag \\ 
 & y_v\geq 0, \  z^{(i)}_{\sigma_i} \geq 0 \ \ \text{for all $v\in
   G_0$ and $\sigma_i\in G_i, \ i=1,\ldots,
   k$.} \notag 
\end{align}
The optimal value of the dual problem \eqref{e:dual.prog} equals 
$\gamma=\gamma(m_0,m_1,\ldots, m_k;G)$; that is, it has the same optimal value as the original linear program in \eqref{e:linear.prog}. For later use, let $(y_v^*), \bigl(z_{\sigma_i}^{(i*)}\bigr)$ be an optimal solution to the dual problem in \eqref{e:dual.prog}, so that 
\begin{equation}  \label{e:gamma.yv*.zsigma*}
\gamma=\sum_{v\in G_0} y_v^* \log m_0 + \sum_{i=1}^k \sum_{\sigma_i\in G_i} z_{\sigma_i}^{(i*)} \log m_i. 
\end{equation}

Now, let us fix a simplicial complex $F$ of dimension $k$, satisfying $s_i(F)\leq m_i, \,
i=0,1,\ldots, k$. 
Then, the upper bound in \eqref{e:both.b.N} is obtained as an
immediate consequence of the bound 
\begin{equation}  \label{e:upper.bound.n0FG}
n_o(F,G) \le C(G)e^\gamma
\end{equation}
for some constant $C(G)$ that does not depend on $F$. 
For the proof of \eqref{e:upper.bound.n0FG}, we consider a partition 
\begin{equation} \label{e:partition.F0}
  F_0=\bigcup_{v\in G_0} V_v
\end{equation}
of the vertex set of $F$ into subsets indexed by the vertices of
$G$. 
Denote by $\calH = \calH(F,G)$ the collection of all ordered copies of $G$ in $F$, so that $\#(\calH(F,G))=n_o(F,G)$. Further, let $\calW=\calW(F,G)$ be a subset of $\calH$, such that each $v\in G_0$ is mapped into one of the vertices in $V_v$. 

We create  a  random 
partition \eqref{e:partition.F0} as follows. To each vertex $w\in
F_0$, assign randomly and 
independently a vertex $U(w)\in G_0$. Now, let
$$
V_v=\bigl\{ w\in F_0:\, U(w)=v\bigr\}, \ v\in G_0
$$
(some sets $V_v$ may be empty). In this setting, $\calW$ is  a \emph{random} subset of $\calH$, so that 
\begin{align*}
 & \E\bigl[ \#(\calW)\bigr] =\sum_{\varphi\in \calH}\P(\varphi \in \calW)= \sum_{\varphi\in \calH}\P\Big( U\big(\varphi(v)\big)=v \ \ \text{for each $v\in
    G_0$}\Big) = s_0(G)^{-s_0(G)}n_o(F,G). 
\end{align*}
This indicates that there exists a \emph{nonrandom} partition \eqref{e:partition.F0} of the
vertex set of $F$, for which 
\begin{equation} \label{e:W.not.small}
  \#(\calW) \geq  s_0(G)^{-s_0(G)} n_0(F,G).
 \end{equation}
Fixing a   collection $\calW$ that satisfies \eqref{e:W.not.small},  we define  $\calW_0 := \bigl\{ H_0:\, H\in\calW\bigr\}$ to be
 the collection of {\it ordered} vertex sets of complexes in $\calW$,
 so that $\#(\calW_0)=\#(\calW)$. For a subset $U\subseteq
 F_0$, define the trace of $\calW_0$ on $U$ by
 \begin{equation*}
{\rm Tr}(\calW_0, U)= \bigl\{ H_0\cap U:\, H_0\in \calW_0\bigr\}. 
 \end{equation*}
 We choose a large
 positive integer $t$ and define
$$
l_0(v)=\lceil t y_v^*\rceil, \, v\in G_0, \ \ l_i(\sigma_i)= \lceil t
z_{\sigma_i}^{(i*)}\rceil, \, \sigma_i\in G_i, \, i=1,\ldots, k, 
$$
where $y_v^*$ and $z_{\sigma_i}^{(i*)}$ are given in \eqref{e:gamma.yv*.zsigma*}. 
Referring to the partition \eqref{e:partition.F0} of $F_0$ constructed above
that satisfies \eqref{e:W.not.small}, 
we now construct a family $U_1,\ldots, U_s$ of subsets of $F_0$ 
as follows. Take each set $V_v$ in \eqref{e:partition.F0} exactly
$l_0(v)$ times for every $v\in G_0$. Next, for each $\sigma_i=\{
v_1,\ldots, v_{i+1}\}\in G_i$, $i=1,\ldots, k$, take the union
$V_{v_1}\cup\ldots\cup V_{v_{i+1}}$ exactly $l_i(\sigma_i)$
times. 
Finally, we enumerate these subsets as $U_1,\dots,U_s$, where 
$$
s=\sum_{v\in G_0}l_0(v) +\sum_{i=1}^k \sum_{\sigma_i\in G_i} l_i(\sigma_i). 
$$
Then, for every $v\in G_0$, each of the vertices in $V_v$ appears exactly $l_0(v)+\sum_{i=1}^k \sum_{\sigma_i\in G_i, \, v\in \sigma_i}l_i(\sigma_i)$ times in the sets $U_1,\dots,U_s$. 
By the constraint of the dual problem 
\eqref{e:dual.prog}, 
\begin{align*}
&l_0(v) + \sum_{i=1}^k \sum_{\sigma_i\in G_i \atop v\in
                 \sigma_i}l_i(\sigma_i) 
\geq t\bigg( y_v^*+\sum_{i=1}^k\sum_{\sigma_i\in G_i \atop v\in
    \sigma_i}z^{(i*)}_{\sigma_i}\bigg)\geq t. 
\end{align*}
This implies that every vertex in $F$ appears at least $t$ times in the sets  $U_1,\ldots, U_s$. 
By Lemma 1.2 in
\cite{friedgut:kahn:1998}, 
\begin{align}
  &\bigl( \#(\calW)\bigr)^t = \bigl( \#(\calW_0)\bigr)^t
    \leq \prod_{m=1}^s \#\bigl({\rm Tr}(\calW_0, U_m)\bigr)\label{e:friedgut.kahn}\\
 =& \prod_{v\in G_0}\left[\#\bigl({\rm Tr}(\calW_0,
    V_v)\bigr)\right]^{l_0(v)}
    \prod_{j=1}^k \prod_{\sigma_j=\{v_1,\ldots, v_{j+1}\}\in G_j}
   \left[ \#\bigl({\rm Tr}(\calW_0, V_{v_1}\cup \ldots \cup  V_{v_{j+1}})\bigr)\right]^{l_j(\sigma_j)}. \notag
\end{align}
By the definition of $\calW$, 
each $H_0\in\calW_0$ has at most one element in $V_v$ for each $v\in G_0$, so
\begin{equation}  \label{e:trace.bound1}
\#\bigl({\rm Tr}(\calW_0,  V_v)\bigr) \leq \#(V_v)\leq s_0(F)\leq m_0.
\end{equation}
Similarly, for each $\sigma_j=\{v_1,\ldots, v_{j+1}\}\in G_j$, the intersection of $H_0\in \calW_0$ and $\bigcup_{i=1}^{j+1}V_{v_i}$ either forms a $j$-simplex in $F$ or  becomes an empty set. Therefore, 
\begin{equation}  \label{e:trace.bound2}
\#\bigl({\rm Tr}(\calW_0, V_{v_1}\cup \ldots\cup V_{v_{j+1}})\bigr)
\leq s_j(F)\leq m_j, \ j=1,\ldots, k.
\end{equation}
Substituting the bounds in \eqref{e:trace.bound1} and \eqref{e:trace.bound2} back into \eqref{e:friedgut.kahn}, we have, as $t\to\infty$, 
\begin{align}
\#(\calW) \leq &\prod_{v\in G_0} m_0^{l_0(v)/t} \prod_{j=1}^k
\prod_{\sigma_j=\{v_1,\ldots, v_{j+1}\}\in G_j} m_j^{l_j(\sigma_j)/t} \label{e:limit.calW}\\
  \to &\prod_{v\in G_0} m_0^{y_v^*}\prod_{j=1}^k
\prod_{\sigma_j=\{v_1,\ldots, v_{j+1}\}\in G_j}
        m_j^{z^{(j*)}_{\sigma_j}}=e^\gamma, \notag 
\end{align}
where the last equality follows from \eqref{e:gamma.yv*.zsigma*}. 
Combining \eqref{e:W.not.small} and \eqref{e:limit.calW},  we have 
$$
n_0(F,G) \leq  s_0(G)^{s_0(G)}e^\gamma, 
$$
which establishes the bound \eqref{e:upper.bound.n0FG}, as desired. 
\end{proof}

The following lemma is a useful consequence of Proposition
\ref{prop:N.LP}. It is a higher-dimensional version of Lemma 2.1 in 
\cite{janson:oleszkiewicz:rucinski:2004}.
\begin{lemma} \label{l:compare.N}
Let $H$ be a $k$-dimensional  subcomplex of $G$ with $s_k(H)>0$. Then, there
exists a constant $C_H\in (1,\infty)$ such that if $0\leq m_1<m_2\leq m_0^{k+1}/s_k(G)$,
\begin{align*}
  &N\bigl( m_0, m_1s_1(G), \ldots, m_1s_k(G); H\bigr) 
  \leq C_H \Big(
\frac{m_1}{m_2}\Big)^{1/(k+1)}
N\bigl( m_0, m_2s_1(G), \ldots, m_2s_k(G); H\bigr).
\end{align*}
\end{lemma}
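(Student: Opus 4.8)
The plan is to use Proposition \ref{prop:N.LP} to replace the two extremal parameters by the optimal values of the two associated linear programs \eqref{e:linear.prog}, and then to compare those values directly. Write $\gamma_1 := \gamma\bigl(m_0, m_1 s_1(G), \dots, m_1 s_k(G); H\bigr)$ and $\gamma_2 := \gamma\bigl(m_0, m_2 s_1(G), \dots, m_2 s_k(G); H\bigr)$. If $N\bigl(m_0, m_1 s_1(G), \dots, m_1 s_k(G); H\bigr) = 0$ there is nothing to prove, so assume it is positive; then $s_0(H)\le m_0$ and $s_i(H)\le m_1 s_i(G)$ for $i=1,\dots,k$, and since $H$ is $k$-dimensional with $s_k(H)>0$, downward closure of simplicial complexes forces $s_i(H)\ge 1$ for all $0\le i\le k$, hence $s_i(G)\ge 1$, $m_1>0$, and $m_2 s_i(G) > m_1 s_i(G)\ge s_i(H)$. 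Thus Proposition \ref{prop:N.LP} applies to both programs, and it suffices to establish
\[
\gamma_1 \;\le\; \gamma_2 - \tfrac{1}{k+1}\log(m_2/m_1).
\]

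The heart of the argument is to perturb an optimal solution of the first program into a feasible solution of the second. Let $(x^*_v)_{v\in H_0}$ attain $\gamma_1$, fix a $k$-simplex $\sigma=\{v_1,\dots,v_{k+1}\}\in H_k$ (available since $s_k(H)>0$), and let $v_j\in\sigma$ be a vertex minimizing $x^*_v$ over $v\in\sigma$. Set $\delta := \tfrac{1}{k+1}\log(m_2/m_1)>0$ and define $x'_v := x^*_v$ for $v\ne v_j$ and $x'_{v_j} := x^*_{v_j}+\delta$. I would then verify feasibility of $(x'_v)$ for the second program. The lower-bound constraints survive because $\delta>0$. For the dimension-$0$ constraint at $v_j$: by the constraint on $\sigma$ in the first program, $x^*_{v_j}\le \tfrac{1}{k+1}\sum_{i=1}^{k+1}x^*_{v_i}\le \tfrac{1}{k+1}\log\bigl(m_1 s_k(G)\bigr)$, so $x'_{v_j}\le \tfrac{1}{k+1}\log\bigl(m_2 s_k(G)\bigr)\le \log m_0$, the last inequality being exactly the hypothesis $m_2\le m_0^{k+1}/s_k(G)$. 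For a dimension-$i$ simplex $\tau\in H_i$ with $1\le i\le k$ containing $v_j$: $\sum_{v\in\tau}x'_v \le \log\bigl(m_1 s_i(G)\bigr)+\delta \le \log\bigl(m_1 s_i(G)\bigr)+\log(m_2/m_1)=\log\bigl(m_2 s_i(G)\bigr)$; constraints not containing $v_j$ are unchanged. Hence $(x'_v)$ is feasible for the second program, so $\gamma_2\ge \sum_v x'_v = \gamma_1+\delta$, which is the displayed inequality.

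Finally I would combine this with Proposition \ref{prop:N.LP}: with $c(H),C(H)$ as there,
\begin{align*}
N\bigl(m_0, m_1 s_1(G),\dots,m_1 s_k(G); H\bigr) &\le C(H)\,e^{\gamma_1} \le C(H)\,e^{\gamma_2}\,(m_1/m_2)^{1/(k+1)}\\
&\le \frac{C(H)}{c(H)}\,(m_1/m_2)^{1/(k+1)}\,N\bigl(m_0, m_2 s_1(G),\dots,m_2 s_k(G); H\bigr),
\end{align*}
so the lemma holds with $C_H := \max\{C(H)/c(H),\,2\}\in(1,\infty)$. I do not anticipate a genuine obstacle: the only real choice is the form of the perturbation --- boosting the single lightest vertex of a $k$-simplex by exactly $\tfrac{1}{k+1}\log(m_2/m_1)$ --- and the one delicate point is checking that the assumption $m_2\le m_0^{k+1}/s_k(G)$ is precisely what preserves the dimension-$0$ constraint after the boost, together with being careful in the degenerate cases (such as $m_1=0$ or $m_0<s_0(H)$) so that Proposition \ref{prop:N.LP} is invoked only when its hypothesis holds.
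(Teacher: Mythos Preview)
Your proof is correct and follows essentially the same route as the paper: both apply Proposition~\ref{prop:N.LP} on each side, take an optimal solution $(x_v^*)$ for the $m_1$-program, boost the lightest vertex of a $k$-simplex by $\tfrac{1}{k+1}\log(m_2/m_1)$, and verify feasibility for the $m_2$-program using the hypothesis $m_2\le m_0^{k+1}/s_k(G)$ to control the dimension-$0$ constraint. The only cosmetic difference is that the paper minimizes over all vertices lying in any $k$-simplex while you fix a single $k$-simplex first; either choice yields the needed bound $x^*_{v_j}\le \tfrac{1}{k+1}\log(m_1 s_k(G))$, so the arguments are interchangeable.
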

\begin{proof}
  It is enough to consider the case $m_1>0$. 
  By Proposition \ref{prop:N.LP}, 
\begin{align}
  &N\bigl( m_0, m_1s_1(G), \ldots, m_1s_k(G); H\bigr) 
  \leq C_1(H)e^{\gamma_1}  \label{e:Nm1.upper.bound}
\end{align}
for some $C_1(H)\in (1,\infty)$, where
\begin{align*}
  &\gamma_1=\max \sum_{v\in H_0}x_v \\
 &\text{subject to} \notag \\ 
  & 0\leq x_v\leq \log m_0, \ v\in H_0, \notag \\
  &\sum_{v\in \sigma_i}x_v\leq \log s_i(G)+\log m_1, \ 
    \sigma_i\in H_i,\, i=1,\ldots, k.\notag 
\end{align*}
Let $(x_v^*, \, v \in H_0)$ be an optimal solution for this problem. Consider all
$v\in H_0$ that belong to a $k$-simplex in $H$, and choose
among them a vertex 
$\tilde v\in H_0$ with the smallest value of $x_v^*$; that is, 
$$
x_{\tilde v}^* = \min_{\sigma_k \in H_k, \, v\in \sigma_k} x_v^*. 
$$
By the feasibility
of $(x_v^*)$, we have
\begin{equation}  \label{e:bdd.x.v.tilde}
x_{\tilde v}^*\leq \frac{1}{k+1} \log s_k(G) +  \frac{1}{k+1} \log
m_1.
\end{equation}
Define for $v\in H_0$, 
$$
x_v^{**} = x_v^* \ \text{for $v\not= \tilde v$}, \ \ x_{\tilde v}^{**}
= x_{\tilde v}^* + \frac{1}{k+1} \log \frac{m_2}{m_1}.
$$
Then for $ \sigma_i\in H_i, \, i=1,\ldots, k$, we have
\begin{align*}
\sum_{v\in \sigma_i}x_v^{**} &= \sum_{v\in \sigma_i}x_v^{*} +
                 \frac{1}{k+1} \log \frac{m_2}{m_1} \\
 &\leq \log s_i(G) + \log m_1 + \frac{1}{k+1} \log \frac{m_2}{m_1}
  \leq \log s_i(G) + \log m_2, 
\end{align*}
while, by \eqref{e:bdd.x.v.tilde}, 
\begin{align*}
x_{\tilde v}^{**} \leq &\frac{1}{k+1} \log s_k(G) +  \frac{1}{k+1} \log
                         m_1 +  \frac{1}{k+1} \log \frac{m_2}{m_1} \\
 = &\frac{1}{k+1} \log s_k(G) +  \frac{1}{k+1} \log m_2\leq \log m_0.
\end{align*}
We thus conclude that $(x_v^{**})$ is a feasible solution to the linear program
\begin{align*} 
  &\gamma_2=\max \sum_{v\in H_0}x_v \\
 &\text{subject to} \notag \\ 
  & 0\leq x_v\leq \log m_0, \ v\in H_0, \notag \\
  &\sum_{v\in \sigma_i}x_v\leq \log s_i(G)+\log m_2, \  \sigma_i\in
    H_i, \, i=1,\ldots, k. \notag 
\end{align*}
Therefore, 
$$
\gamma_2\geq \sum_{v\in H_0}x_v^{**} = \gamma_1+  \frac{1}{k+1} \log \frac{m_2}{m_1} .
$$
Appealing once again to  Proposition \ref{prop:N.LP}, as well as \eqref{e:Nm1.upper.bound},  we have for some constant
$C_2(H)\in (0,1)$, 
\begin{align*}
&N\bigl( m_0, m_2s_1(G), \ldots, m_2s_k(G); H\bigr) \geq C_2(H)
  e^{\gamma_2} \geq C_2(H) \left( \frac{m_2}{m_1} \right)^{1/(k+1)}
  e^{\gamma_1}  \\
  &\quad \ge \frac{C_2(H)}{C_1(H)}  \left( \frac{m_2}{m_1}
  \right)^{1/(k+1)} \hspace{-5pt} N\bigl( m_0, m_1s_1(G), \ldots, m_1s_k(G); H\bigr), 
\end{align*}
as required. 
\end{proof}

For numbers $0\leq p_i\leq 1, \, i=1,\ldots, k$, and a simplicial complex $G$ of dimension $k$, denote 
 \begin{equation*}
 \Psi_{G,n} :=n^{s_0(G)}\prod_{i=1}^k
p_i^{s_i(G)}  \sim \mu_{o,n}(G), \ n\to\infty,
\end{equation*}
and define
\begin{align*}
M^*_{G,n}(p_1,\ldots, p_k) := \max\biggl\{ &1\leq m\leq \frac{{n\choose k+1}}{s_k(G)}:
        N\bigl( n, ms_1(G), \ldots, ms_k(G); H\bigr) \leq \Psi_{H,n}  \\
 & \qquad \qquad  \qquad  \text{for every non-empty subcomplex $H$ of
   $G$}\biggr\}.     \notag 
\end{align*}

The following theorem is the main result of this section.  It is an
extension of Theorem 1.2 of \cite{janson:oleszkiewicz:rucinski:2004}
to the multi-parameter random simplicial complexes. 
Its statement uses
the notation $K_{k,n}$ for the complete complex of dimension $k$ of
$n$ vertices (i.e. a complex on $n$ vertices containing all possible
simplices of dimensions $k$ and smaller). 

\begin{theorem} \label{t:upper}
For every $\vep>0$, 
there exists $C(\vep,G)>0$ so that for all $n\ge1$, 
\begin{equation} \label{e:tail.prob.upper1.0}
  \P\bigl(N_{o,n}(G)\ge (1+\vep)\mu_{o,n}(G)\bigr) \leq \exp\Bigl\{ -C(\vep,G)
  M^*_{G,n}(p_1,\ldots,p_k)\Bigr\}. 
\end{equation}
Moreover, if $(1+\vep)\mu_{o,n}(G)\leq N\bigl( K_{k,n},G\bigr)$, there exists 
$B(\vep,G)>0$, such that for all $n\ge 2k+1$, 
\begin{align}  \label{e:lower.b4.0}
  &\P\bigl( N_{o,n}(G)\geq (1+\vep)\mu_{o,n}(G)\bigr) 
   \geq     \frac14 \left(  \prod_{j=1}^k
    p_j\right)^{B(\vep,G)M^*_{G,n}(p_1,\ldots,p_k)}.
\end{align}  
\end{theorem}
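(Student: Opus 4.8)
The plan is to adapt the two-sided strategy of Janson–Oleszkiewicz–Ruci\'nski (Theorem~1.2 of \cite{janson:oleszkiewicz:rucinski:2004}) to the multi-parameter setting, using Proposition~\ref{prop:N.LP} and Lemma~\ref{l:compare.N} as the combinatorial input. For the upper bound \eqref{e:tail.prob.upper1.0}, I would expand $N_{o,n}(G)$ as a sum of indicators over ordered copies $\varphi$ of $G$ and apply the moment/deletion method: truncate at an appropriate moment order, or more directly use the inequality $\P(N_{o,n}(G) \ge (1+\vep)\mu_{o,n}(G)) \le \P(Z \ge \vep \mu_{o,n}(G)/2)$ where $Z$ counts copies that share a simplex with another copy, together with a Markov bound on the number of ``isolated'' copies. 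The key point is that, for a subcomplex $H$ of $G$ appearing in the overlap of two copies, the expected number of such overlapping configurations is governed, up to constants depending on $G$, by $N(n, m s_1(G),\dots, m s_k(G); H) \cdot (\text{probability weight})$, and the definition of $M^*_{G,n}$ is precisely the threshold at which all these extremal counts stay below $\Psi_{H,n} \sim \mu_{o,n}(H)$. Chebyshev/Markov at level $m = M^*_{G,n}$ then yields a bound of the form $\exp\{-C(\vep,G) M^*_{G,n}\}$ after optimizing the truncation order against $m$; Lemma~\ref{l:compare.N} is what lets one interpolate the extremal parameter between scales $m_1$ and $m_2$ and thereby convert the combinatorial threshold into an exponential rate.

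For the lower bound \eqref{e:lower.b4.0}, I would use the standard ``planting'' construction: by definition of $M^*_{G,n}$ and Proposition~\ref{prop:N.LP}, there is an extremal complex $F$ on $O(n)$ vertices with $s_i(F) \le M^*_{G,n} s_i(G)$ that contains $\gtrsim e^{\gamma}$ ordered copies of $G$, where $e^\gamma \asymp N(n, M^*_{G,n} s_1(G), \dots; G) $. One embeds (a rescaled copy of) $F$ into $[n]$ and conditions on \emph{all} of the at most $O(M^*_{G,n} s_i(G))$ potential $i$-simplices of this planted structure being present; this event has probability at least $\bigl(\prod_{j=1}^k p_j\bigr)^{B(\vep,G) M^*_{G,n}}$ for a suitable $B(\vep,G)$, since the total simplex budget of $F$ is linear in $M^*_{G,n}$. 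On this event the planted copy alone contributes $\gtrsim e^\gamma$ ordered copies of $G$; one checks that $e^\gamma$ is comparable to $\mu_{o,n}(G)$ at the threshold $m=M^*_{G,n}$ (this is where the hypothesis $(1+\vep)\mu_{o,n}(G) \le N(K_{k,n},G)$ and a suitable choice of the constant in $F$ enter, guaranteeing the planted copies exceed $(1+\vep)\mu_{o,n}(G)$), and a second-moment or union-bound argument over the additional random copies accounts for the factor $1/4$.

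The main obstacle I anticipate is the bookkeeping in the upper-bound argument: unlike the graph case, here the overlap of two copies of $G$ may involve simplices of several dimensions simultaneously, so the relevant ``overlap subcomplex'' $H$ ranges over all nonempty $k$-dimensional (and lower-dimensional) subcomplexes of $G$, and one must control $\sum_H N(n, m s_1(G),\dots,m s_k(G); H)\cdot p_1^{-s_1(H)}\cdots$-type quantities uniformly. The definition of $M^*_{G,n}$ is tailored to make each term subcritical, but assembling these into a single exponential rate — and in particular checking that the truncated moment method loses only a constant factor in the exponent (via Lemma~\ref{l:compare.N}) rather than a polynomial or worse — is the delicate step. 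A secondary technical point is handling the regime where $M^*_{G,n}$ is small or the constraint $m \le \binom{n}{k+1}/s_k(G)$ is active; there one falls back to a crude bound (the copies of $G$ simply cannot exceed $N(K_{k,n},G)$), which is why the theorem is stated with constants $C(\vep,G), B(\vep,G)$ absorbing all such edge effects and why \eqref{e:tail.prob.upper1.0} is only asserted ``for all $n\ge 1$'' with no sharper constant.
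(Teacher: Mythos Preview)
Your upper-bound sketch via the high-moment method is essentially what the paper does: bound $\E[N_{o,n}(G)^m]$ by induction on $m$, each step picking up a factor
\[
\mu_{o,n}(G)\Bigl(1+ s_0(G)!\sum_{H\subseteq G} \frac{N(n,(m-1)s_1(G),\ldots,(m-1)s_k(G);H)}{\mu_{o,n}(H)}\Bigr)
\]
from the overlap of the $m$th copy with the union of the first $m-1$; then take $m=\lceil\theta M^*_{G,n}\rceil$, use Lemma~\ref{l:compare.N} to make the bracket at most $1+c\,\theta^{1/(k+1)}$, and finish with Markov. (Your alternative ``direct'' route through a first-moment bound on the overlap count $Z$ would not produce an exponential rate and should be dropped.)

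The lower-bound plan has a genuine gap. You propose to plant a complex $F$ with $s_i(F)\leq M^*_{G,n}s_i(G)$ containing roughly $e^\gamma$ copies of $G$ \emph{itself}, and then assert that $e^\gamma$ is comparable to $\mu_{o,n}(G)$. But $M^*_{G,n}$ is a minimum over \emph{all} subcomplexes $H$ of $G$, and the bottleneck $H$ for which $N(n,(M^*_{G,n}+1)s_1(G),\ldots;H)>\Psi_{H,n}$ need not be $G$. When it is a proper subcomplex, $N(n,M^*_{G,n}s_1(G),\ldots;G)$ can be polynomially smaller than $\Psi_{G,n}$, and no constant rescaling of $F$ repairs this. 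A graph example: take $G$ to be a triangle with a pendant path of length two, $p=n^{-\alpha}$ with $\alpha<1/3$. The triangle is the bottleneck, $M^*_{G,n}$ is of order $n^{2-2\alpha}$, and any $F$ with $O(M^*_{G,n})$ edges contains only order $n^{4-2\alpha}$ copies of $G$, whereas $\Psi_{G,n}=n^{5-5\alpha}$; the ratio is $n^{3\alpha-1}\to 0$.

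The paper's remedy is to plant an $F$ rich in copies of the \emph{critical subcomplex} $H$ (so that $n(F,H)\geq 2(1+\vep)\mu_{o,n}(H)$), condition on $F\subseteq K(n;\bp)$, and count the ``$F$-rooted'' ordered copies of $G$ in $K_{k,n}$: those whose canonical copy of $H$ lies inside $F$. The remaining $s_j(G)-s_j(H)$ simplices of each dimension are supplied by the \emph{random} complex, and Lemma~3.3 of \cite{janson:oleszkiewicz:rucinski:2004} furnishes the factor $\tfrac14$. Lemma~\ref{l:compare.N} is used here too, but in the reverse direction from the upper bound: to upgrade $N(n,(M^*_{G,n}+1)s_1(G),\ldots;H)>\Psi_{H,n}$ to $N(n,C(\vep)M^*_{G,n}s_1(G),\ldots;H)\geq 2(1+\vep)\Psi_{H,n}$ at the cost of a constant in the exponent.
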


\begin{remark} \label{rk:main}
Theorem \ref{t:upper} identifies the order of magnitude of the upper tail large
deviation probability  
at the  logarithmic scale precision. 
The logarithmic order of magnitude  
differs between the upper bound \eqref{e:tail.prob.upper1.0} and the
lower bound \eqref{e:lower.b4.0} by a factor of $\log \big(\prod_{j=1}^k p_j\big)$. 
    We note that under a common setup $p_i= n^{-\alpha_i}$ for some $\alpha_i\in [0,\infty]$, $i\ge1$, as in Section \ref{sec:simplices} below, the  factor $\log \big(\prod_{j=1}^k p_j\big)$  is logarithmic in $n$.  In contrast,  
    the main term $M^*_{G,n}(p_1,\ldots, p_k)$ typically grows
    polynomially, determining largely the order of magnitude of the
    large deviation probability.


Note also that the condition $(1+\vep)\mu_{o,n}(G)\leq N\bigl(
K_{k,n},G\bigr)$ is rarely restrictive. In fact, it is equivalent to
$(1+\vep) \prod_{j=1}^k p_j^{s_j(G)} \le1$; this, however, trivially
holds whenever $p_j\to0$ as $n\to\infty$ for some $j$. 


\end{remark}

\begin{proof}
We start with proving the upper bound in \eqref{e:tail.prob.upper1.0}. Let
$G_1,\ldots, G_M$ be  the ordered copies of $G$ in $K_{k,n}$ where $M=(n)_{s_0(G)}=n(n-1)\cdots \bigl( n-s_0(G)+1\bigr)$. Clearly, 
\begin{equation*}
  N_{o,n}(G)=\sum_{j=1}^M I_j,
\end{equation*}
where
$$
I_j = \one\bigl\{ G_j  \text{ is a subcomplex of }  K(n; p_1,\ldots,
p_k)\bigr\}, \ \ j=1,\ldots, M.
$$
Therefore, for each $m=1,2\ldots$, we have
\begin{align} \label{e:moment.m0}
\E [N_{o,n}(G)^m] = \sum_{1\le i_1,\ldots,i_m\le M} \E\bigl[ I_{i_1}\cdots I_{i_m}\bigr]
= \sum_{1\le i_1,\ldots,i_m\le M} \prod_{j=1}^k p_j^{s_j( G_{i_1}\cup\cdots\cup
  G_{i_m})}, 
\end{align}
with $s_j(\cdot)$ as in \eqref{e:number.simp}. For a fixed  ${\bf i}^{(m-1)}=(i_1,\ldots, 
i_{m-1})\in \{ 1,\dots,M \}^{m-1}$, denote $F_{{\bf
    i}^{(m-1)}}=G_{i_1}\cup\cdots\cup   G_{i_{m-1}}$. Then
\eqref{e:moment.m0} becomes 
\begin{align*}
\E [N_{o,n}(G)^m] = \sum_{{\bf i}^{(m-1)}} \prod_{j=1}^k p_j^{s_j(F_{{\bf
    i}^{(m-1)}})} \sum_{i_m=1}^M \prod_{j=1}^k p_j^{s_j(G)-s_j(F_{{\bf
    i}^{(m-1)}}\cap G_{i_m})}. 
\end{align*}
For every fixed ${\bf  i}^{(m-1)}$, 
if $F_{{\bf i}^{(m-1)}}\cap G_{i_m}$ contains at least one simplex of positive
dimension,  then this  intersection is isomorphic to some subcomplex $H$ of $G$ of positive dimension. 
 We thus conclude that 
\begin{align}
\E [N_{o,n}(G)^m] &\le \sum_{{\bf i}^{(m-1)}} \prod_{j=1}^k p_j^{s_j(F_{{\bf i}^{(m-1)}})}   \bigg[ M \prod_{j=1}^k p_j^{s_j(G)} + \hspace{-10pt}\sum_{i_m: F_{{\bf i}^{(m-1)}} \cap G_{i_m} \neq\emptyset}  \prod_{j=1}^k p_j^{s_j(G)-s_j(F_{{\bf i}^{(m-1)}}\cap G_{i_m})} \bigg] \notag \\
&=\sum_{{\bf i}^{(m-1)}} \prod_{j=1}^k p_j^{s_j(F_{{\bf i}^{(m-1)}})} \bigg[ \mu_{o,n}(G) + \sum_{H\subseteq G}  \prod_{j=1}^k p_j^{s_j(G)-s_j(H)} \#\big\{i: F_{{\bf i}^{(m-1)}} \cap G_i \cong H\big\} \bigg], \notag 
\end{align}
where $\mu_{o,n}(G)$ is given in \eqref{e:mean.ordered}, and the sum $\sum_{H\subseteq G}$ is taken over all subcomplexes of $G$ with positive dimension, and $\cong$ means isomorphism between simplicial complexes. 
For every subcomplex $H$ of $G$, there are at most
\begin{align*}
&N\bigl( s_0(F_{{\bf i}^{(m-1)}}), s_1(F_{{\bf i}^{(m-1)}}), \ldots,
                 s_k(F_{{\bf  i}^{(m-1)}}); H\bigr) 
\leq N\bigl( n, (m-1)s_1(G),\ldots, (m-1)s_k(G); H\bigr)
\end{align*}
ways to choose an unordered copy of $H$ in $F_{{\bf i}^{(m-1)}}$. To
bound the number of 
ordered copies of $G$ in $K_{k,n}$ whose intersection with $F_{{\bf
    i}^{(m-1)}}$ is isomorphic to $H$  (i.e., $\#\{i: F_{{\bf i}^{(m-1)}} \cap G_i \cong H\}$), notice that each choice of
an unordered copy of $H$ in $F_{{\bf i}^{(m-1)}}$ determines $s_0(H)$ vertices in the
copy of $G$; thus,  the number of ways to select the remaining vertices
of the copy of $G$ is at most $\big( n-s_0(H) \big)_{s_0(G)-s_0(H)} = (n)_{s_0(G)}/(n)_{s_0(H)}$. 
Finally, the vertices of the copy of $G$ can be numbered in at most
$s_0(G)!$ ways. From these observations, we conclude that 
\begin{align*}
&\#\big\{i: F_{{\bf i}^{(m-1)}} \cap G_i \cong H\big\}  \le N\bigl( n, (m-1)s_1(G),\ldots, (m-1)s_k(G); H\bigr) 
\frac{(n)_{s_0(G)}}{(n)_{s_0(H)}} \, s_0(G)!. 
\end{align*}
Now \eqref{e:mean.ordered} gives us
\begin{align*} 
\E [N_{o,n}(G)^m] \leq & \sum_{{\bf i}^{(m-1)}} \prod_{j=1}^k p_j^{s_j(F_{{\bf
  i}^{(m-1)}})} \\
  & \times \mu_{o,n}(G)\bigg[ 1 + s_0(G)!\sum_{H \subseteq G}
  \frac{N\bigl( n, (m-1)s_1(G),\ldots, (m-1)s_k(G); H\bigr)}{\mu_{o,n}(H)}
  \bigg].  
\end{align*}
Using \eqref{e:moment.m0}  with $m$ replaced by $m-1$ results in 
$$
\E [N_{o,n}(G)^m] \leq \E[N_{o,n}(G)^{m-1}]\mu_{o,n}(G)\bigg[ 1+ s_0(G)!\sum_{H \subseteq G}
\frac{N\bigl( n, (m-1)s_1(G),\ldots, (m-1)s_k(G);
  H\bigr)}{\mu_{o,n}(H)}\bigg]. 
$$
By the monotonicity of the function $N$ in all of its arguments, 
an inductive argument gives us the bound
\begin{equation}  \label{e:moment.m.f}
  \E [N_{o,n}(G)^m] \leq \mu_{o,n}(G)^{m} \Biggl[ 1+ 
   s_0(G)!\sum_{H \subseteq G}
\frac{N\bigl( n, (m-1)s_1(G),\ldots, (m-1)s_k(G);
  H\bigr)}{\mu_{o,n}(H)}\Biggr]^{m-1}, 
\end{equation}
for every $m\ge1$.

For $\theta\in (0,1)$ to be determined in the sequel, we take
$m=\lceil\theta M^*_{G,n}(p_1,\ldots,p_k)\rceil=:\lceil\theta M_{G,n}^*\rceil$. Note that by
Lemma \ref{l:compare.N},
\begin{align*}
  &N\bigl( n, (m-1)s_1(G),\ldots, (m-1)s_k(G);  H\bigr) \\
  \leq &N\bigl( n, \theta M_{G,n}^* s_1(G),\ldots, \theta M_{G,n}^* s_k(G);
  H\bigr) \\
\leq &  C_H \theta^{1/(k+1)} N\bigl( n,  M_{G,n}^* s_1(G),\ldots,  M_{G,n}^* s_k(G);
  H\bigr) \leq  \theta^{1/(k+1)}  C_H \Psi_{H,n}. 
\end{align*}
Therefore, 
using \eqref{e:moment.m.f} with $m=\lceil\theta M_{G,n}^*\rceil$ and Markov's inequality, 
we obtain for $\vep> 0$,
\begin{align*}
  &\P\bigl(N_{o,n}(G)\ge (1+\vep)\mu_{o,n}(G)\bigr) \\
  &\le (1+\vep)^{-\theta M_{G,n}^*} \bigg[
  1+   \theta^{1/(k+1)} s_0(G)!
      \sum_{H 
  \subseteq G} \frac{C_H\Psi_{H,n}}{\mu_{o,n}(H)}\bigg]^{\theta M_{G,n}^*} \notag \\
&=   (1+\vep)^{-\theta M_{G,n}^*} \bigg[ 1+ \theta^{1/(k+1)}  s_0(G)!\sum_{H
  \subseteq G} \frac{C_H n^{s_0(H)}}{n(n-1)\cdots \bigl(
   n-s_0(H)+1\bigr)}\bigg]^{\theta M_{G,n}^*}  \notag \\
 &\le   \bigg[ (1+\vep)^{-1} \Big( 1+ \theta^{1/(k+1)} s_0(G)^{s_0(G)} \sum_{H \subseteq G} C_H\Big)\bigg]^{\theta M_{G,n}^*}. \notag  
\end{align*}
Choosing 
$$
0<\theta<\min\bigg\{   1, \bigg(\frac{\vep}{ s_0(G)^{s_0(G)}
    \sum_{H \subseteq G} C_H}  \bigg)^{k+1}  \bigg\}
$$
establishes \eqref{e:tail.prob.upper1.0}.

Our proof of \eqref{e:lower.b4.0} is also inspired by an argument in
\cite{janson:oleszkiewicz:rucinski:2004}.   Suppose there exist $a_i>0, \, i=1,\ldots,
k$, $m\ge1$, and a subcomplex $H$ of $G$, such that 
\begin{equation} \label{e:cond.lower}
N\bigl( n,a_1m,\ldots, a_km; H\bigr)\geq 2(1+\vep)\Psi_{H,n}.
\end{equation} 
This implies that there is a complex $F$ on at most $n$ nodes with 
$s_i(F)\leq a_im, \, i=1,\ldots, k$, such that
\begin{equation} \label{e:lower.N}
n(F,H)\geq 2(1+\vep)\Psi_{H,n}\geq 2(1+\vep) \mu_{o,n}(H).
\end{equation}
Our first goal is to show that under the assumption \eqref{e:cond.lower}, 
\begin{equation}   \label{e:lower.b1}
\P\big( N_{o,n}(G)\geq (1+\vep)\mu_{o,n}(G)\big)\ge \frac14 \prod_{j=1}^k  p_j^{s_j(G)+a_jm}. 
\end{equation}

Given a subcomplex  $H$ of $G$ satisfying \eqref{e:cond.lower}, we see that each  of the ordered copies of $G$ in
$K_{k,n}$ has a 
unique corresponding ordered copy of $H$ in $K_{k,n}$; if the latter
is also in $F$, we refer to  that ordered copy of $G$ as being $F$-rooted. Since there
are $n_o(F,H) = \#({\rm Aut}(H)) n(F,H)$ ordered copies of $H$ in $F$, the number of
  $F$-rooted ordered copies of $G$ in $K_{k,n}$ is
  $$
  J := n_o(F,H)\bigl(n-s_0(H)\bigr)_{s_0(G)-s_0(H)}. 
  $$
Denote these ordered copies of $G$ in $K_{k,n}$ by $G_1,\ldots, G_J$. Since $\#({\rm
   Aut}(H))\geq 1$, it follows from \eqref{e:lower.N} that 
 \begin{align} \label{e:mean.not.l}
   J\prod_{j=1}^k p_j^{s_j(G)-s_j(H)}\geq
   2(1+\vep)\mu_{o,n}(H)\bigl(n-s_0(H)\bigr)_{s_0(G)-s_0(H)}\prod_{j=1}^k
   p_j^{s_j(G)-s_j(H)} =2(1+\vep)\mu_{o,n}(G).
\end{align}

Let  $K_F(n; p_1,\ldots, p_k)$ be the multi-parameter simplicial complex $K(n; p_1,\ldots,
p_k)$ conditioned on $F\subseteq K(n; p_1,\ldots, p_k)$. For
$i=1,\ldots, J$, let $Z_i$ be the indicator function of the event that $G_i$ is a subcomplex of 
$K_F(n; p_1,\ldots, p_k)$. Then, 
\begin{equation}  \label{e:lower.bdd.indep.of.i}
  \P(Z_i=1) = \prod_{j=1}^k p_j^{s_j(G_i\setminus F)}
  \geq \prod_{j=1}^k p_j^{s_j(G)-s_j(H)}.
\end{equation}
Since the rightmost term in \eqref{e:lower.bdd.indep.of.i} is independent of $i$, the lower bound in Lemma 3.3 of
\cite{janson:oleszkiewicz:rucinski:2004} gives us 
\begin{align*}
&\P\bigg( N_{o,n}(G)\geq \frac{J}{2} \prod_{j=1}^k p_j^{s_j(G)-s_j(H)}\Bigg|
  F\subseteq K(n; p_1,\ldots, p_k)\bigg) \\
\geq &\P\bigg( \sum_{i=1}^J Z_i\geq \frac{J}{2} \prod_{j=1}^k
       p_j^{s_j(G)-s_j(H)} \bigg) 
\geq \frac14 \prod_{j=1}^k  p_j^{s_j(G)-s_j(H)} \geq \frac14 \prod_{j=1}^k  p_j^{s_j(G)}.
\end{align*}
Therefore, by \eqref{e:mean.not.l}, 
\begin{align*}
&\P\big( N_{o,n}(G)\geq (1+\vep)\mu_{o,n}(G)\big)\geq \P\Big( N_{o,n}(G)\geq \frac{J}{2}
                \prod_{j=1}^k p_j^{s_j(G)-s_j(H)}\Big) \\
\geq & \frac14 \prod_{j=1}^k  p_j^{s_j(G)}\P\big( F\subseteq K(n;
       p_1,\ldots, p_k)\big)   \\
  =& \frac14 \prod_{j=1}^k  p_j^{s_j(G)} \prod_{j=1}^k  p_j^{s_j(F)}
  \geq  \frac14 \prod_{j=1}^k  p_j^{s_j(G)+a_jm}, 
\end{align*}
establishing \eqref{e:lower.b1}, as desired. 

Now, we are ready to prove the lower bound in \eqref{e:lower.b4.0}.
Suppose first that
\begin{equation} \label{e:Mg.small}
  2\big(2(1+\vep)C_H\big)^{k+1}M_{G,n}^*\leq s_k(G)^{-1}  {n\choose k+1}, 
\end{equation}
where $C_H>1$ is the constant in Lemma \ref{l:compare.N}, that is increased, without loss of generality, to
be the nearest positive integer. Then,  $M_{G,n}^*<\lfloor s_k(G)^{-1}{n\choose k+1}\rfloor$, so there is
a subcomplex $H$ of $G$ such that
$$
N\Bigl( n, \bigl( M_{G,n}^* +1\bigr) s_1(G), \ldots,  \bigl( M_{G,n}^* +1\bigr)
s_k(G); H\Bigr) >\Psi_{H,n}.
$$
Therefore,  by Lemma \ref{l:compare.N}, 
\begin{align*}
2(1+\vep)\Psi_{H,n}<&2(1+\vep) N\Bigl( n, \bigl( M_{G,n}^* +1\bigr) s_1(G), \ldots,  \bigl( M_{G,n}^* +1\bigr)
s_k(G); H\Bigr) \\
  \leq &2(1+\vep) N\Bigl( n, 2M_{G,n}^* s_1(G), \ldots,  2M_{G,n}^* s_k(G); H\Bigr) \\
  \leq & N\Bigl( n,2(2(1+\vep)C_H)^{k+1}M_{G,n}^*s_1(G), \ldots,
         2(2(1+\vep)C_H)^{k+1}M_{G,n}^*s_k(G); H\Bigr). 
\end{align*}
Since the condition \eqref{e:cond.lower} is now satisfied with $a_i=s_i(G)$, $i=1,\dots,k$, and $m=2\big(2(1+\vep) C_H \big)^{k+1}M_{G,n}^*$,  we conclude by
\eqref{e:lower.b1} that
\begin{align} \label{e:lower.b2}
  &\P\big( N_{o,n}(G)\geq (1+\vep)\mu_{o,n}(G)\big)\geq  
    \frac14 \prod_{j=1}^k  p_j^{s_j(G)+2(2(1+\vep)C_H)^{k+1}M_{G,n}^*s_j(G) }
   \geq      \frac14 \bigg(  \prod_{j=1}^k  p_j\bigg)^{B_1(\vep,G)M_{G,n}^*},
\end{align}
where
$$
B_1(\vep,G)=\max_{j=1,\ldots,k}s_j(G)\Bigl[ 1+ 2\bigl( 2(1+\vep)\max_{H\subseteq
  G} C_H\bigr)^{k+1}\Bigr].
$$

Next, we need to consider the case when \eqref{e:Mg.small}  does not hold. 
In this case, it follows from the assumption 
$(1+\vep)\mu_{o,n}(G)\leq N(K_{k,n},G)$ that, for $n\geq 2k+1$, 
\begin{align} \label{e:lower.b3}
  &\P\big( N_{o,n}(G)\geq (1+\vep)\mu_{o,n}(G)\big)\geq  \P\big( N_{o,n}(G)\geq  N\bigl(
    K_{k,n},G\bigr)\big) \\
  \geq &\P\Bigl( K(n; p_1,\ldots, p_k) = K_{k,n}\Bigr)
         = \prod_{j=1}^k  p_j^{n\choose j+1}
  \geq \prod_{j=1}^k  p_j^{n\choose k+1}
    \geq \bigg( \prod_{j=1}^k  p_j\bigg)^{B_2(\vep,G)M_{G,n}^*}, \notag
\end{align}
where
$$
B_2(\vep,G)=2\bigl( 2(1+\vep)\max_{H\subseteq 
  G} C_H\bigr)^{k+1} s_k(G). 
$$
Now, \eqref{e:lower.b4.0} follows from \eqref{e:lower.b2} and
\eqref{e:lower.b3} . 
\end{proof}

\section{Simplices at the critical dimension and below} 
\label{sec:simplices}

Distributional limit theorems for the multi-parameter simplicial
complex $K(n; \bp)$ were obtained in
\cite{owada:samorodnitsky:thoppe:2021}. These 
results are obtained under the assumption 
\begin{equation} \label{e:power.p}
  p_i=n^{-\alpha_i}, \ i\ge1,
\end{equation}
for $\alpha_i \in [0,\infty]$, $i\ge1$. In this section we retain this
assumption and, instead of distributional results, we investigate
large deviation probabilities for the number of certain simplices in
$K(n; \bp)$. We will use the general results obtained in
the previous section.

We are particularly interested in counting the simplices at or below the
\emph{critical dimension} of the model. In other words, we consider the simplices of dimension $k\ge1$, satisfying 
\begin{equation} \label{e:subcritical.k}
  \sum_{i=1}^k {k\choose i} \alpha_i<1, \ \ \text{ and } \ \ q:=\min\{ i\ge1: \alpha_i>0 \} \le k; 
\end{equation}
this introduces a minor unimportant abuse of notation by conflating the dimension
of the simplex with the largest dimension of an entire complex. Counting
the simplices
at the critical dimension is particularly important since their
numbers largely determine the behaviour of the Euler characteristic
of  $K(n; \bp)$; 
\cite{thoppe:yogeshwaran:adler:2016, owada:samorodnitsky:thoppe:2021}.

Let $\sigma_k$ be a simplex of dimension $k$ satisfying \eqref{e:subcritical.k}.
The following proposition, a part of which requires an extra
assumption on the parameters, describes the size of a 
crucial ingredient in the logarithmic order of magnitude of the upper tail
large deviations probability: $\P\bigl(N_{o,n}(\sigma_k)>(1+\vep)
\mu_{o,n}(\sigma_k)\bigr)$ for $\vep>0$. Notice that the extra assumption \eqref{e:new.cond} below will be used only for proving a lower bound in \eqref{e:conj.M}. 
\begin{proposition} \label{prop:simplex}
Suppose  that \eqref{e:subcritical.k} holds. In the case of $q<k$, suppose also that for any $k_0=q+1,\ldots, k$, 
\begin{equation} \label{e:new.cond}
\frac{k-q}{k+1}\binom{k+1}{q+1}\alpha_q+ \sum_{j=q+1}^{k_0} {k+1 \choose j+1}\alpha_j
 <k_0-q. 
\end{equation}
Then, 
for large enough $n$, 
\begin{equation} \label{e:conj.M}
 C_k^{-1}n^{q+1-\binom{k}{q}\alpha_q}\leq  M_{\sigma_k,n}^*\bigl(n^{-\alpha_1},\ldots,
 n^{-\alpha_k}\bigr)\leq C_kn^{q+1-\binom{k}{q}\alpha_q}
 \end{equation}
 for some $C_k\geq 1$. In particular, for $\vep>0$ and  all large
 enough $n$, 
 \begin{equation} \label{e:LDP.simplex}
\exp\Bigl\{ -C_k^\prime(\vep)n^{q+1-\binom{k}{q}\alpha_q}\log n\Bigr\} \leq 
 \P\bigl(N_{o,n}(\sigma_k)\ge (1+\vep)\mu_{o,n}(\sigma_k)\bigr) \leq
 \exp\Bigl\{ -C_k^{\prime\prime}(\vep)n^{q+1-\binom{k}{q}\alpha_q}\Bigr\}
\end{equation}
for some positive constants $C_k^\prime(\vep), \, C_k^{\prime\prime}(\vep)$. 
\end{proposition}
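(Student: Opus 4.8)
The plan is to first establish the two-sided estimate \eqref{e:conj.M} for $M^*_{\sigma_k,n}$ and then read off \eqref{e:LDP.simplex} from Theorem \ref{t:upper}. Write $\Delta:=\sigma_k$, so that $s_0(\Delta)=k+1$ and $s_i(\Delta)=\binom{k+1}{i+1}$, and set $\beta^*:=q+1-\binom{k}{q}\alpha_q$; by \eqref{e:subcritical.k} we have $0<\binom{k}{q}\alpha_q\le\sum_{i=1}^k\binom{k}{i}\alpha_i<1$, so $\beta^*\in(q,q+1)$. For a non-empty subcomplex $H$ of $\Delta$ put $\psi_H:=s_0(H)-\sum_{i=1}^k s_i(H)\alpha_i$ (so $\Psi_{H,n}=n^{\psi_H}$), and let $g_H(\beta)$ be the optimal value of
\begin{equation*}
\max\sum_{v\in H_0}\xi_v \quad\text{subject to}\quad 0\le\xi_v\le1 \ (v\in H_0), \qquad \sum_{v\in\sigma}\xi_v\le\beta \ \bigl(\sigma\in H_i,\ i\ge1\bigr),
\end{equation*}
which is concave and non-decreasing in $\beta$. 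Proposition \ref{prop:N.LP} gives $c(H)e^{\gamma_H(m)}\le N\bigl(n,ms_1(\Delta),\dots,ms_k(\Delta);H\bigr)\le C(H)e^{\gamma_H(m)}$, where $\gamma_H(m)$ is the value of \eqref{e:linear.prog} with $m_0=n$ and $m_i=ms_i(\Delta)$; since the $i$-face bound there equals $\log m+\log\binom{k+1}{i+1}$, one has $\gamma_H(m)=g_H(\log m/\log n)\log n+O_k(1)$, with the $O_k(1)$ adjustable by a multiplicative constant in $m$, and $c(H),C(H)$ uniformly bounded (finitely many $H\subseteq\Delta$). Thus $M^*_{\sigma_k,n}$ is, at the logarithmic scale, governed by the largest $m$ with $g_H(\log m/\log n)\le\psi_H$ for all $H$, and \eqref{e:conj.M} asserts that the threshold exponent is $\beta^*$.

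For the upper bound $M^*_{\sigma_k,n}\le C_k n^{\beta^*}$ it suffices to produce one subcomplex that fails. Take $H=K$, the $q$-skeleton of $\Delta$. The symmetric point $\xi_v\equiv\min\{1,\beta/(q+1)\}$ is feasible, so $g_K(\beta)\ge\frac{k+1}{q+1}\beta$ for $0\le\beta\le q+1$; since $K$ has $\binom{k+1}{i+1}$ faces of dimension $i\le q$ and none above, and $\frac{k+1}{q+1}\binom{k}{q}=\binom{k+1}{q+1}$, we get $\frac{k+1}{q+1}\beta^*=(k+1)-\binom{k+1}{q+1}\alpha_q=\psi_K$. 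Hence, by the lower bound in Proposition \ref{prop:N.LP}, if $m\approx Cn^{\beta^*}$ with $C$ a large enough constant depending only on $k$, then for large $n$
\begin{equation*}
N\bigl(n,ms_1(\Delta),\dots,ms_k(\Delta);K\bigr)\ \ge\ c(K)e^{\gamma_K(m)}\ \ge\ c_k'\,C^{(k+1)/(q+1)}\,\Psi_{K,n}\ >\ \Psi_{K,n}
\end{equation*}
for a constant $c_k'>0$ depending only on $k$; as $N$ is non-decreasing in $m$, $K$ then violates the defining condition of $M^*_{\sigma_k,n}$ for all larger $m$, so $M^*_{\sigma_k,n}<(C+1)n^{\beta^*}$. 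This step does not use \eqref{e:new.cond}.

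For the lower bound $M^*_{\sigma_k,n}\ge C_k^{-1}n^{\beta^*}$, fix $m=m_n:=\lfloor cn^{\beta^*}\rfloor$ with $c\in(0,1)$ a small constant depending only on $k$, to be chosen, and show $N\bigl(n,m_ns_1(\Delta),\dots,m_ns_k(\Delta);H\bigr)\le\Psi_{H,n}$ for every non-empty $H\subseteq\Delta$ and all large $n$. If $\dim H<q$, every $\alpha_i$ with $i\le\dim H$ vanishes, so $\psi_H=s_0(H)$; and since every admissible complex has at most $n$ vertices, $N(\,\cdot\,;H)\le(n)_{s_0(H)}\big/\#\bigl({\rm Aut}(H)\bigr)\le n^{s_0(H)}=\Psi_{H,n}$. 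If $\dim H\ge q$, then $\dim H+1\ge q+1>\beta^*$, so the all-ones point is infeasible for the $g_H$-program at $\beta^*$; hence $g_H(\beta^*)<s_0(H)$ and, by concavity and monotonicity, $g_H$ has a strictly positive left derivative $\rho_H$ at $\beta^*$, so that together with the key inequality
\begin{equation*}
g_H(\beta^*)\le\psi_H\qquad\text{for every non-empty subcomplex }H\text{ of }\Delta
\end{equation*}
and concavity we get $g_H(\beta^*-t)\le\psi_H-\rho_H t$ for all $t\ge0$. Choosing $c$ small enough that every $i$-face bound in \eqref{e:linear.prog} is $\le\beta^*\log n-L$ for a prescribed large constant $L$, the upper bound in Proposition \ref{prop:N.LP} yields $N(\,\cdot\,;H)\le C(H)e^{\gamma_H(m_n)}\le C(H)\,n^{\,g_H(\beta^*-L/\log n)}\le C(H)e^{-\rho_H L}\Psi_{H,n}\le\Psi_{H,n}$, the last inequality holding once $L$ (hence $c$) is large relative to the finitely many constants $C(H),\rho_H$ (alternatively, shrink $m$ by a large constant factor as in Lemma \ref{l:compare.N}). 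This establishes \eqref{e:conj.M}.

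It remains to prove the key inequality $g_H(\beta^*)\le\psi_H$, the only place \eqref{e:new.cond} is needed. By LP duality, $g_H(\beta^*)$ equals the minimum, over nonnegative weightings of the vertices and faces of $H$ that fractionally cover every vertex, of the total in which a vertex has weight $1$ and an $i$-face has weight $\min\{i+1,\beta^*\}=\beta^*$ for $i\ge q$ (since $\beta^*<q+1$). One builds such a covering from the faces of $H$ of dimension $\ge q$, padding with unit vertex weights for any vertex lying in no such face, and checks that the total is $\le\psi_H$; the extremal subcomplexes turn out to be the full $d$-skeletons of $\Delta$ and of its sub-simplices, $q\le d\le k$, for which — using $\frac{q+1}{k+1}\binom{k+1}{q+1}=\binom{k}{q}$ and $\frac{k-q}{k+1}\binom{k+1}{q+1}=\binom{k}{q+1}$ — the inequality $g_H(\beta^*)\le\psi_H$ reduces precisely to \eqref{e:new.cond} with $k_0=d$. (When $q=k$ the only subcomplex of $\Delta$ of dimension $\ge q$ is $\Delta$ itself, where $g_\Delta(\beta^*)=\beta^*=\psi_\Delta$ needs no extra hypothesis.) Finally, \eqref{e:LDP.simplex} follows from \eqref{e:conj.M} and Theorem \ref{t:upper}: the upper bound is $\P\bigl(N_{o,n}(\sigma_k)\ge(1+\vep)\mu_{o,n}(\sigma_k)\bigr)\le\exp\{-C(\vep,\sigma_k)M^*_{\sigma_k,n}\}\le\exp\{-C(\vep,\sigma_k)C_k^{-1}n^{\beta^*}\}$; for the lower bound, $(1+\vep)\mu_{o,n}(\sigma_k)\le N(K_{k,n},\sigma_k)$ holds for large $n$ because $p_q=n^{-\alpha_q}\to0$ (Remark \ref{rk:main}), so $\P(\cdots)\ge\tfrac14\bigl(\prod_{j=1}^k p_j\bigr)^{B(\vep,\sigma_k)M^*_{\sigma_k,n}}=\tfrac14\,n^{-(\sum_j\alpha_j)B(\vep,\sigma_k)M^*_{\sigma_k,n}}\ge\exp\{-C_k'(\vep)n^{\beta^*}\log n\}$, using $M^*_{\sigma_k,n}\le C_kn^{\beta^*}$ and $\sum_j\alpha_j<\infty$ (forced by \eqref{e:subcritical.k}). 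The main obstacle is the key inequality: controlling $g_H(\beta^*)$ uniformly over all subcomplexes $H$ of $\sigma_k$ and identifying \eqref{e:new.cond} as exactly the hypothesis that makes it hold; the remaining steps are routine given Proposition \ref{prop:N.LP}, Lemma \ref{l:compare.N}, and Theorem \ref{t:upper}.
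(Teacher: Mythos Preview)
Your outline agrees with the paper where it is explicit: the upper bound on $M^*_{\sigma_k,n}$ via the $q$-skeleton and the symmetric feasible point, the translation between $\gamma_H(m)$ and your normalized program $g_H$, and the passage from \eqref{e:conj.M} to \eqref{e:LDP.simplex} via Theorem~\ref{t:upper} are all essentially what the paper does. The real gap is your treatment of the ``key inequality'' $g_H(\beta^*)\le\psi_H$, which you assert but do not actually prove.

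Two specific problems. First, your claim that for the full $d$-skeleton the inequality ``reduces precisely to \eqref{e:new.cond} with $k_0=d$'' is false for $q<d<k$: for the $d$-skeleton one has $g_H(\beta^*)=\frac{k+1}{d+1}\beta^*$, and the resulting condition is strictly \emph{weaker} than \eqref{e:new.cond} (e.g.\ for $k=3,\,q=1,\,d=2$ you get $3\alpha_1+6\alpha_2\le 2$, while \eqref{e:new.cond} gives $3\alpha_1+4\alpha_2<1$). Only at $d=k$ do the two coincide. And you never justify the assertion that skeletons are extremal for $\psi_H-g_H(\beta^*)$; nothing in your sketch rules out some irregular $H$ being worse.

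Second, and more seriously, subcomplexes $H$ with $\dim H=q$ are not covered by \eqref{e:new.cond} at all (that condition is stated only for $k_0\ge q+1$), yet the key inequality must still be checked for them, and it is not trivial: a single-face dual cover of the type you suggest gives only $g_H(\beta^*)\le s_0(H)-(q+1)+\beta^*$, which would require $s_q(H)\le\binom{k}{q}$ and fails already for the full $q$-skeleton. The paper handles this case by an entirely different route, viewing $H$ as a $(q+1)$-uniform hypergraph and invoking Proposition~4.3 of Dudek--Polcyn--Ruci\'nski (using that $H$ sits inside a $\binom{k}{q}$-regular hypergraph). For $\dim H=k_0>q$ the paper also does not argue via extremal $H$; instead it writes down the explicit dual point $y_v=1-s_{k_0,v}/s_{k_0}(H)$, $z^{(k_0)}_{\sigma}=1/s_{k_0}(H)$, $z^{(i)}\equiv0$ for $i\neq k_0$, which yields the uniform bound $g_H(\beta^*)\le (k-k_0)+\beta^*$, and then bounds $\psi_H$ from below by replacing each $s_j(H)$ with $\binom{k+1}{j+1}$. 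The requirement $(k-k_0)+\beta^*<k+1-\sum_{j=q}^{k_0}\binom{k+1}{j+1}\alpha_j$ is then exactly \eqref{e:new.cond}. In other words, \eqref{e:new.cond} arises in the paper as ``suboptimal dual bound $\le$ worst-case $\psi_H$'', not as the exact LP value at any particular extremal subcomplex.
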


\begin{remark} \label{rk:1skel}
Proposition \ref{prop:simplex} indicates that, at least under an extra
condition, it is the skeleton of dimension $q$, the lowest non-trivial dimension of the complex,  that plays a crucial role in
determining the rate of decay of the upper large deviation
probabilities at the critical dimension and below it. The reason appears to be the fact that ``flipping" of a $q$-simplex from ``on" to ``off" or vice verse affects the topology of the complex more than does any flipping in other dimensions. The same phenomenon has already been observed in the central limit theorem for the simplex counts at the critical dimension; see Proposition 3.6 in \cite{owada:samorodnitsky:thoppe:2021}. 
\end{remark}
\begin{remark}\label{rk:extra.cond} {\rm
Suppose  $q=1$ in \eqref{e:subcritical.k}. Then, if $k=1$, the statement of Proposition \ref{prop:simplex} 
follows from Corollary 1.7 in \cite{janson:oleszkiewicz:rucinski:2004}. For $k=2$, 
it is easy to see that condition \eqref{e:new.cond} follows from the
subcriticality condition \eqref{e:subcritical.k}, but that is no longer the case for 
$k\geq 3$. However, if $k=3$, one can still directly compute the value of $M_{\sigma_3,n}^*(n^{-\alpha_1}, n^{-\alpha_2}, n^{-\alpha_3})$ and verify the inequalities in \eqref{e:conj.M}, without using condition \eqref{e:new.cond}. 
To summarize, the claim of Proposition \ref{prop:simplex} holds at least for $k\in\{1,2,3\}$, under the assumption \eqref{e:subcritical.k} only. We do not know if one can deduce the same conclusion for   $k\ge4$. }
\end{remark}

\begin{proof}
Since \eqref{e:LDP.simplex} follows from \eqref{e:conj.M} and Theorem
\ref{t:upper}, we only need to prove the bounds in \eqref{e:conj.M}. 
We start with the upper bound.  Recall that 
$$
M_{\sigma_k,n}^*\bigl(n^{-\alpha_1},\ldots,
     n^{-\alpha_k}\bigr) = \min_{H:  \text{  subcomplex of $\sigma_k$}}  K_H,
$$
where for a subcomplex $H$ of $\sigma_k$, 
\begin{align} \label{e:K.H}
K_H=\max\biggl\{ m\leq {n\choose k+1}:  
  N\left( n, m{k+1 \choose 2}, m\binom{k+1}{3}, \ldots, m\binom{k+1}{k}, m; H\right) \leq
  \Psi_{H,n}\biggr\}. 
\end{align} 
Therefore, to prove the upper bound in \eqref{e:conj.M} we only need
to detect a specific subcomplex $H$ of $\sigma_k$, such that
\begin{equation} \label{e:conj.upper.proof}
  K_H \leq C_kn^{q+1-\binom{k}{q}\alpha_q}.
\end{equation} 

Let us take $H$ to be the $q$-skeleton of $\sigma_k$. For this $H$, in
the obvious notation, 
\begin{equation} \label{e:Kh.1skel}
K_H=\max\biggl\{ m\leq {n\choose k+1}:  
  N\left( n, m{k+1 \choose 2}, \dots, m\binom{k+1}{q+1}; H\right) \leq   n^{k+1- {k+1 \choose q+1}\alpha_q}\biggr\}. 
\end{equation}
By Proposition \ref{prop:N.LP}, 
\begin{equation} \label{e:kgen.both}
  a_ke^\gamma\leq N\left( n,  m{k+1 \choose 2}, \dots, m\binom{k+1}{q+1}; H\right)  \leq b_ke^\gamma
\end{equation}
for some $a_k,b_k>0$, where 
\begin{align}
  &\gamma= \max \sum_{v=1}^{k+1}x_v  \label{e:one.dim.linear.prog}\\
&\text{subject to}   \notag \\ 
  & 0\leq x_v  \leq \log n, \ v=1, \ldots, k+1, \notag \\
 &\sum_{v\in \sigma_i} x_v\le \log \bigg\{  m\binom{k+1}{i+1} \bigg\}, \ \ \sigma_i\in H_i, \, i=1,\dots,q. \notag 
\end{align}
First, suppose $\binom{k+1}{q+1}m>n^{q+1}$, in which case, we have 
 $\binom{k+1}{j+1}m>n^{j+1}$, $j=1,\dots,q$. Then, $x_v=\log n$, $v=1,\dots,k+1$, is easily seen to be an optimal solution to \eqref{e:one.dim.linear.prog}. It then follows from \eqref{e:kgen.both} that 
\begin{equation}  \label{e:lower.bdd.N.q}
N\left( n,  m{k+1 \choose 2}, \dots, m\binom{k+1}{q+1}; H\right) \ge a_kn^{k+1}. 
\end{equation}
However, as $\alpha_q>0$, there is no $m\in \bbn$ that satisfies \eqref{e:lower.bdd.N.q} and the inequality in \eqref{e:Kh.1skel}. 

Therefore, we only need to consider the case $\binom{k+1}{q+1}m\le
n^{q+1}$. Then, one can see that 
$$
x_v = \frac{1}{q+1}\log \bigg\{  m\binom{k+1}{q+1} \bigg\},  \ \ v=1,\dots,k+1,
$$
is an optimal solution to the linear program \eqref{e:one.dim.linear.prog}, so that 
$$
\gamma = \frac{k+1}{q+1} \log \bigg\{  m\binom{k+1}{q+1} \bigg\}. 
$$
Therefore, by \eqref{e:kgen.both}, 
$$
 b_k^{-\frac{q+1}{k+1}} \binom{k+1}{q+1}^{-1}n^{q+1-\binom{k}{q}\alpha_q} \leq K_H\leq
 a_k^{-\frac{q+1}{k+1}} \binom{k+1}{q+1}^{-1} n^{q+1-\binom{k}{q}\alpha_q}, 
$$
and \eqref{e:conj.upper.proof} follows.

We now prove the lower bound in  \eqref{e:conj.M}. For this purpose we need to prove that for every subcomplex $H$
of $\sigma_k$, 
\begin{equation} \label{e:conj.lower.proof}
  K_H \geq C_k^{-1}n^{q+1-\binom{k}{q}\alpha_q}.
\end{equation} 
Consider first a subcomplex $H$ of dimension $1,\dots,q-1$. In this case, it is clear that $\Psi_{H,n}=n^{k+1}$, and thus, $K_H=\binom{n}{k+1}$ and \eqref{e:conj.lower.proof} trivially holds. 
Consider next a subcomplex $H$ of dimension $q$. Let $\bar H$ be a $(q+1)$-uniform hypergraph on $k+1$ vertices with its hyperedges identified as a $q$-simplex in $H$. Given another hypergraph $\bar F$, define $\bar n(\bar F, \bar H)$ to be the number of unordered copies (as a hypergraph) of $\bar H$ in $\bar F$. Define also 
$$
\bar N \left(n,m\binom{k+1}{q+1}; \bar H  \right) :=\max \bigg\{ \bar n(\bar F, \bar H): v_{\bar F}\le n, \ e_{\bar F}\le m \binom{k+1}{q+1}  \bigg\}, 
$$
where $v_{\bar F}$ is the number of vertices in $\bar F$ and $e_{\bar F}$ the number of hyperedges in $\bar F$. Then, by construction, we have 
$$
\bar N \left(n,m\binom{k+1}{q+1}; \bar H  \right)  \ge N \left(n,m\binom{k+1}{2}, \dots, m\binom{k+1}{q+1};  H  \right). 
$$
By virtue of this inequality together with $v_{\bar H}=k+1$ and $e_{\bar H}=s_q(H)$, 
\begin{align}
K_H &= \max \bigg\{ m\le \binom{n}{k+1}: N \left(n,m\binom{k+1}{2}, \dots, m\binom{k+1}{q+1};  H  \right) \le n^{k+1-s_q(H)\alpha_q}  \bigg\}  \label{e:KH.lower.2nd.case}\\
&\ge  \max \bigg\{ m\le \binom{n}{k+1}:  \bar N \left(n,m\binom{k+1}{q+1}; \bar H  \right) \le n^{v_{\bar H}} p_q^{e_{\bar H}} \bigg\}. \notag 
\end{align}
Observe that $\bar H$ is seen to be a subhypergraph of a $\binom{k}{q}$-regular, $(q+1)$-uniform hypergraph. Moreover, by \eqref{e:subcritical.k}, 
$$
p_q=n^{-\alpha_q} \ge n^{-\binom{k}{q}^{-1}} > n^{-(q+1)\binom{k}{q}^{-1}}. 
$$
Hence, Proposition 4.3 in \cite{dudek:polcyn:rucinski:2010} implies that the last quantity in \eqref{e:KH.lower.2nd.case} is at least 
$$
Cn^{q+1}p_q^{\binom{k}{q}} = C n^{q+1-\binom{q}{k}\alpha_q}, 
$$
for some constant $C$, as desired for \eqref{e:conj.lower.proof}. 


Now, it remains  to establish
 \eqref{e:conj.lower.proof} for subcomplexes $H$ of dimension
 $k_0=q+1,\ldots, k$.
 By \eqref{e:K.H}, we need to show that there exists $C_k>0$ such
 that for any subcomplex $H$ of $\sigma_k$  on
 $k+1$ vertices, and all $n$ large enough,
 \begin{align*}
 &N\left( n, \lceil C_k^{-1} n^{q+1-\binom{k}{q}\alpha_q}\rceil {k+1 \choose 2},
   \ldots, \lceil C_k^{-1} n^{q+1-\binom{k}{q}\alpha_q}\rceil {k+1 \choose k_0+1};
   H\right)     \leq n^{k+1-\sum_{j=q}^{k_0}s_j(H)\alpha_j}.
 \end{align*}
It follows from Proposition \ref{prop:N.LP}, together with the dual formulation \eqref{e:dual.prog}, 
 that for a given subcomplex $H$ and  $n$
large enough, 
it is sufficient to exhibit non-negative numbers $y_v,\, v=1,\ldots,k+1$ and
$z_{\sigma_i}^{(i)}, \, \sigma_i\in H_i, \, i=1,\ldots, k_0$, such that
\begin{equation} \label{e:dual.cond.0}
  y_v+\sum_{i=1}^{k_0}\sum_{\sigma_i\in H_i \atop v\in
    \sigma_i}z^{(i)}_{\sigma_i} \geq 1 \ \ \text{for any $v=1,\ldots,k+1$}, 
\end{equation}
and
\begin{align} \label{e:small.sum}
&\sum_{v=1}^{k+1}y_v \log n+ \sum_{i=1}^{k_0} \sum_{\sigma_i\in H_i}
z^{(i)}_{\sigma_i} \log \left(\lceil C_k^{-1} n^{q+1-\binom{k}{q}\alpha_q}\rceil
  {k+1 \choose i+1}\right) \\
\notag \leq& \Big(   k+1-\sum_{j=q}^{k_0}s_j(H)\alpha_j\Big)\log n +B, 
\end{align}
where $B$ is a $k$-dependent constant. It is 
clear that if we can choose these numbers in such a way that 
\begin{equation} \label{e:small.sum2}
  \sum_{v=1}^{k+1}y_v + \Big(q+1-\binom{k}{q}\alpha_q\Big)\sum_{i=1}^{k_0}
  \sum_{\sigma_i\in H_i}  z^{(i)}_{\sigma_i}   <
  k+1-\sum_{j=q}^{k_0}s_j(H)\alpha_j,
\end{equation}
then \eqref{e:small.sum} will be satisfied for large $n$, regardless of
 the constant $C_k$  above. Specifically, 
we choose the numbers $(y_v)$ and $(z_{\sigma_i}^{(i)})$ as follows. 
\begin{align*} 
&y_v=1-\frac{s_{k_0,v}}{s_{k_0}(H)}, \  v=1,\ldots, k+1, \\
&z^{(k_0)}_{\sigma_{k_0}} = \frac{1}{s_{k_0}(H)}, \ i=k_0, \ \ \   z^{(i)}_{\sigma_i}=0, \, i\not=k_0, \notag 
\end{align*} 
where for a vertex $v$, $s_{k_0,v}$ is the number of $k_0$-simplices
in $H$ to which $v$ belongs. It is elementary to check that these variables satisfy the
constraints in \eqref{e:dual.cond.0} as equalities. Moreover, it is evident that 
$$
\sum_{i=1}^{k_0}
\sum_{\sigma_i\in H_i}  z^{(i)}_{\sigma_i}=1,
$$
while
\begin{align*}
 &\sum_{v=1}^{k+1}y_v
   =k+1-\frac{1}{s_{k_0}(H)}\sum_{v=1}^{k+1}s_{k_0,v} 
  = k+1-(k_0+1)  =k-k_0,
\end{align*}
since every $k_0$-simplex contributes to exactly $k_0+1$ vertices. Therefore, \eqref{e:small.sum2} reduces to
\begin{equation} \label{e:interm.1}
k-k_0+ q+1-\binom{k}{q}\alpha_q<k+1-\sum_{j=q}^{k_0}s_j(H)\alpha_j.
\end{equation}
Since
$$
s_j(H) \leq {k+1 \choose j+1}, \ \ \  j=q,\ldots, k_0,
$$
\eqref{e:interm.1} follows from  \eqref{e:new.cond}.
\end{proof}

\section{The Betti number at the critical dimension} \label{sec:betti}

In this section we will use the results of Section \ref{sec:simplices}
to derive large deviation results for the Betti number at the critical
dimension. We still assume that the 
probabilities $(p_i, \, i\ge1)$ are given by \eqref{e:power.p}. 
Recall that a dimension $k^*\ge1$ is called critical if
\begin{equation} \label{e:crit.dim}
 \sum_{i=1}^{k^*} {k^*\choose i} \alpha_i<1< \sum_{i=1}^{k^*+1} {k^*+1\choose i}
 \alpha_i, \ \ \text{ and } \ \ q\le k^*; 
\end{equation} 
see \cite{fowler:2019, owada:samorodnitsky:thoppe:2021}. 
We consider the Betti
number $\beta_{k^*, n}$ at the critical dimension.  By the
Morse inequalities (see e.g. Exercise 1, p. 61 in
\cite{munkres:1984}), for any dimension $j$,
\begin{equation} \label{e:morse}
N_n(\sigma_j)-N_n(\sigma_{j-1})-N_n(\sigma_{j+1}) \leq \beta_{j,n}\leq N_n(\sigma_j),
\end{equation}
where $N_n(\sigma_j)$ is the number of $j$-simplices in the
multi-parameter simplicial complex. 
It follows from Proposition 3.1 in
\cite{owada:samorodnitsky:thoppe:2021} that
\begin{equation} \label{e:face.means}
\E\bigl[ N_n(\sigma_j)\bigr] \sim \frac{n^{\tau_j}}{(j+1)!}, \ \ \ 
n\to\infty,
\end{equation}
with
$$
\tau_j= j+1-\sum_{i=1}^j {j+1 \choose i+1}\alpha_i, \ \ j=1,2,\ldots,
$$
and the sequence $\tau_1,\tau_2,\ldots$ reaches its unique maximum at
the critical dimension $k^*$. It thus follows from \eqref{e:morse} and
\eqref{e:face.means} that
\begin{equation*}
\E\bigl[ \beta_{k^*,n}\bigr] \sim \frac{n^{\tau_{k^*}}}{(k^*+1)!}, \ \ \
n\to\infty. 
\end{equation*}

The following theorem considers the upper tail large deviations
for $\beta_{k^*,n}$. An extra condition is needed.
\begin{theorem} \label{prop:betti}
Under the conditions of Proposition \ref{prop:simplex}, assume also
that
\begin{equation} \label{e:new.cond2}
  \sum_{i=q}^{k^*+1}\binom{k^*+1}{i}\alpha_i-1\ge \frac{q(k^*+2)!}{(q+1)!(k^*+1-q)!(k^*+1)} \, \alpha_q. 
\end{equation}
Then for any $\vep>0$, there exist positive constants $ C_1(\vep),\,
C_2(\vep)>0$, such that  
\begin{equation} \label{e:LD.betti}
\exp\Big\{ - C_1(\vep)
             n^{q+1-\binom{k^*}{q}\alpha_q}\log n\Big\}  \leq \P\left(
             \beta_{k^*,n}\ge (1+\vep) \frac{n^{\tau_{k^*}}}{(k^*+1)!}\right) \leq \exp\Big\{
-C_2(\vep) n^{q+1-\binom{k^*}{q}\alpha_q}\Big\}
\end{equation}
for all $n$ large enough. 
\end{theorem}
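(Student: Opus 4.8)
The plan is to obtain the upper and lower bounds in \eqref{e:LD.betti} separately, sandwiching $\beta_{k^*,n}$ between simplex counts via the Morse inequalities \eqref{e:morse} and then invoking the large deviation estimates for simplex counts from Proposition \ref{prop:simplex}. For the upper bound, I would start from $\beta_{k^*,n}\le N_n(\sigma_{k^*})$, so that the event $\{\beta_{k^*,n}\ge (1+\vep)n^{\tau_{k^*}}/(k^*+1)!\}$ is contained in $\{N_n(\sigma_{k^*})\ge (1+\vep)n^{\tau_{k^*}}/(k^*+1)!\}$. Since $n^{\tau_{k^*}}/(k^*+1)!\sim \E[N_n(\sigma_{k^*})]\sim \mu_{o,n}(\sigma_{k^*})/\#({\rm Aut}(\sigma_{k^*}))$ by \eqref{e:face.means}, after shrinking $\vep$ to some $\vep'>0$ this is in turn contained in $\{N_{o,n}(\sigma_{k^*})\ge (1+\vep')\mu_{o,n}(\sigma_{k^*})\}$ for large $n$; the right-hand inequality in \eqref{e:LDP.simplex} of Proposition \ref{prop:simplex}, applied with $k=k^*$ (note $k^*$ satisfies the subcriticality condition \eqref{e:subcritical.k} by \eqref{e:crit.dim}), then gives exactly the exponential upper bound $\exp\{-C_2(\vep)n^{q+1-\binom{k^*}{q}\alpha_q}\}$.

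For the lower bound, I would instead use the left inequality in \eqref{e:morse}, namely $\beta_{k^*,n}\ge N_n(\sigma_{k^*})-N_n(\sigma_{k^*-1})-N_n(\sigma_{k^*+1})$. The idea is that on an event forcing $N_n(\sigma_{k^*})$ to be unusually large while $N_n(\sigma_{k^*-1})$ and $N_n(\sigma_{k^*+1})$ stay near their (much smaller, by maximality of $\tau_{k^*}$) means, $\beta_{k^*,n}$ must itself be large. Concretely, I would invoke the lower bound construction underlying \eqref{e:LDP.simplex}: the event $\{N_{o,n}(\sigma_{k^*})\ge (1+\vep')\mu_{o,n}(\sigma_{k^*})\}$ has probability at least $\exp\{-C_1'(\vep')n^{q+1-\binom{k^*}{q}\alpha_q}\log n\}$, and — tracing back through the proof of Theorem \ref{t:upper} — this probability is realized by \emph{planting} a suitable dense subcomplex $F$ (with $s_j(F)$ controlled by $M^*_{\sigma_{k^*},n}$) and using the Janson--Oleszkiewicz--Ruci\'nski Lemma 3.3 argument. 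On the conditional space $K_F(n;\bp)$, the count $N_n(\sigma_{k^*})$ is large with the claimed probability; simultaneously I need $N_n(\sigma_{k^*-1})$ and $N_n(\sigma_{k^*+1})$ to not be much larger than their unconditional means, which are $O(n^{\tau_{k^*-1}})$ and $O(n^{\tau_{k^*+1}})$ respectively — strictly smaller powers of $n$ than $n^{\tau_{k^*}}$. A crude first-moment / Markov bound on these lower- and higher-dimensional counts on the conditioned space suffices: the planted complex $F$ adds only $O(M^*_{\sigma_{k^*},n})=O(n^{q+1-\binom{k^*}{q}\alpha_q})$ extra simplices in each dimension, which is $o(n^{\tau_{k^*}})$ under the assumption that we are below criticality, so it does not disturb the dominant balance. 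Combining, $\beta_{k^*,n}\ge (1-o(1))N_n(\sigma_{k^*})\ge (1+\vep)n^{\tau_{k^*}}/(k^*+1)!$ on an event of probability at least $\exp\{-C_1(\vep)n^{q+1-\binom{k^*}{q}\alpha_q}\log n\}$.

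The role of the extra hypothesis \eqref{e:new.cond2} is, I expect, precisely to guarantee that planting the subcomplex $F$ that boosts $N_n(\sigma_{k^*})$ does not also accidentally boost $N_n(\sigma_{k^*+1})$ past $(1+\vep)n^{\tau_{k^*}}/(k^*+1)!$ or comparable order — i.e. it controls how many $(k^*+1)$-simplices can be forced into existence once the $k^*$-skeleton is made dense in the planted region. Equivalently, \eqref{e:new.cond2} compares the exponent $q+1-\binom{k^*}{q}\alpha_q$ governing the size of the planted region against the exponent $\tau_{k^*+1}=k^*+2-\sum_{i=1}^{k^*+1}\binom{k^*+2}{i+1}\alpha_i$ (after an index manipulation one checks the displayed inequality is exactly the statement that the former, times the appropriate combinatorial factor counting how many $(k^*+1)$-simplices sit on the planted vertices, stays below $\tau_{k^*}$). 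I would verify this bookkeeping as a short lemma.

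The main obstacle I anticipate is the lower-bound step: ensuring that, on the \emph{conditioned} probability space $K_F(n;\bp)$, both $N_n(\sigma_{k^*-1})$ and especially $N_n(\sigma_{k^*+1})$ remain $o(n^{\tau_{k^*}})$ with conditional probability bounded away from $0$ (or at least failing with probability much smaller than $\exp\{-C_1(\vep)n^{q+1-\binom{k^*}{q}\alpha_q}\log n\}$). The subtlety is that conditioning on $F\subseteq K(n;\bp)$ is a positive-correlation conditioning that can only increase these counts, so I need a clean first-moment bound for $\E[N_n(\sigma_{k^*+1})\mid F\subseteq K(n;\bp)]$; this decomposes into the count of $(k^*+1)$-simplices entirely outside $F$ (unconditional mean, $O(n^{\tau_{k^*+1}})$) plus those partially using vertices/faces of $F$, and the latter term is where \eqref{e:new.cond2} must do its work. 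Once that estimate is in hand, a union bound over the "bad" events against the "good" event $\{N_n(\sigma_{k^*})$ large$\}$, combined with the probability lower bound from \eqref{e:LDP.simplex}, finishes the argument.
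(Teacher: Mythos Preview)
Your upper bound is exactly the paper's argument.

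For the lower bound, the paper takes a genuinely different route. Rather than the Morse sandwich $\beta_{k^*,n}\ge N_n(\sigma_{k^*})-N_n(\sigma_{k^*-1})-N_n(\sigma_{k^*+1})$, it uses the cohomological identity $\beta_{k^*,n}=(\text{rank of }k^*\text{-cocycles})-(\text{rank of }k^*\text{-coboundaries})$. It plants the complete $q$-skeleton on $m\asymp n^{1-\frac{1}{q+1}\binom{k^*}{q}\alpha_q}$ vertices (so $\binom{m}{q+1}\asymp M^*_{\sigma_{k^*},n}$), and on that event counts \emph{free} $k^*$-simplices---those not contained in any $(k^*+1)$-simplex. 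Indicators of free $k^*$-simplices are linearly independent cocycles, so many free simplices force the cocycle rank up. The coboundary rank is at most $N_n(\sigma_{k^*-1})$, which is subcritical and handled by the unconditional upper bound of Proposition \ref{prop:simplex} at dimension $k^*-1$. The hypothesis \eqref{e:new.cond2} enters precisely to guarantee that, conditional on the planting, the probability that a given $k^*$-tuple in the planted region forms a \emph{free} simplex is bounded away from zero: it is exactly the condition $m\cdot n^{-\sum_{i=q+1}^{k^*+1}\binom{k^*+1}{i}\alpha_i}=O(1)$, so that the factor $(1-\prod_{i=q+1}^{k^*+1}p_i^{\binom{k^*+1}{i}})^{m}$ does not collapse.

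Your approach has a real gap at the $N_n(\sigma_{k^*+1})$ step. An \emph{unconditional} union bound cannot work with the tools at hand: dimension $k^*+1$ is supercritical, so Proposition \ref{prop:simplex} does not apply, and Theorem \ref{t:upper} only yields $\P(N_n(\sigma_{k^*+1})\ge (1+\vep)\mu)\le\exp\{-C\,M^*_{\sigma_{k^*+1},n}\}$; the $q$-skeleton subcomplex already forces $M^*_{\sigma_{k^*+1},n}\le C\,n^{q+1-\binom{k^*+1}{q}\alpha_q}$, a \emph{smaller} power of $n$ than $n^{q+1-\binom{k^*}{q}\alpha_q}$, so this bad-event probability dominates your good-event lower bound and the subtraction is useless. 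The \emph{conditional} Markov route you sketch also does not close as written: tracing Theorem \ref{t:upper}'s lower bound gives only $\P(N_n(\sigma_{k^*})\ \text{large}\mid F)\ge \tfrac14\prod_{j>q}p_j^{\binom{k^*+1}{j+1}}$, which is polynomially small in $n$, whereas a first-moment Markov bound on the conditional $(k^*+1)$-count gives merely $o(1)$; the difference can be negative. To salvage your route you would need a genuine concentration statement showing the good event has conditional probability tending to $1$, which is additional work you have not supplied. The paper's free-simplex device sidesteps all of this by building the ``no $(k^*+1)$-simplex above'' constraint directly into the counted objects.
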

\begin{proof}
Using  \eqref{e:morse} and Proposition  \ref{prop:simplex}, we have 
\begin{align*}
&\P\left( \beta_{k^*,n}\ge(1+\vep) \frac{n^{\tau_{k^*}}}{(k^*+1)!}\right) \leq
  \P\left( N_n(\sigma_{k^*}) \ge(1+\vep)\frac{n^{\tau_{k^*}}}{(k^*+1)!}\right)
\leq \exp\Bigl\{ -C(\vep) n^{q+1-\binom{k^*}{q}\alpha_q}\Bigr\},
\end{align*}
for some constant $C(\vep)>0$. 
This proves the upper bound in \eqref{e:LD.betti}. 

To prove the lower bound, let  $D$ be a positive constant to be determined in a
moment and take 
\begin{equation*}
  m = \Bigl\lceil ( D(1+\vep))^{1/(k^*+1)} n^{1-\frac{1}{q+1}\binom{k^*}{q}\alpha_q}\Bigr\rceil.
\end{equation*}
Fix arbitrary $m$ vertices out of $n$ (say, the vertices numbered
$1,\ldots, m$). 
Let  $A$ be the event for which all ${m\choose q+1}$
 simplices of dimension $q$ based on these $m$ vertices are present in $K(n;
\bp)$. 
Given a simplicial complex, its simplex  is said to be \emph{free} if it is not in
the boundary of any  simplex of a larger dimension in that complex. 
Define  $F_{k^*,n}$ to be the number of free $k^*$-simplices in $K(n; \bp)$. 
Then, conditionally on $A$, a set of $k^*+1$ vertices in $\{ 1,\dots,m \}$ forms a free $k^*$-simplex in $K(n; \bp)$ with probability 
\begin{equation}  \label{e:prob.free.simplex}
\prod_{i=q+1}^{k^*} p_i^{{k^*+1 \choose i+1}} \Big( 1- \prod_{i=q+1}^{k^*+1}p_i^{\binom{k^*+1}{i}} \Big)^{m-(k^*+1)}
\Big( 1- \prod_{i=q}^{k^*+1}p_i^{\binom{k^*+1}{i}} \Big)^{n-m}. 
\end{equation}
Note that the probability \eqref{e:prob.free.simplex} is independent of the choice of $k^*+1$ vertices in $\{ 1,\dots,m \}$. 
Since there are ${m\choose k^*+1}$ such potential free $k^*$-simplices in $\{ 1,\dots,m \}$, it follows from 
Lemma 3.3 in \cite{janson:oleszkiewicz:rucinski:2004} that 
\begin{align*}
&\P \bigg( F_{k^*,n} \ge \frac12 {m\choose k^*+1} \prod_{i=q+1}^{k^*} p_i^{{k^*+1 \choose
    i+1}}\Big( 1-\prod_{i=q+1}^{k^*+1}p_i^{\binom{k^*+1}{i}}\Big)^{m-(k^*+1)}
    \Big( 1- \prod_{i=q}^{k^*+1}p_i^{\binom{k^*+1}{i}} \Big)^{n-m}\, \bigg|\, A \bigg) \\
&\ge  \frac14 \prod_{i=q+1}^{k^*} p_i^{{k^*+1 \choose i+1}}\Big( 1-\prod_{i=q+1}^{k^*+1}p_i^{\binom{k^*+1}{i}}\Big)^{m-(k^*+1)}
\Big( 1- \prod_{i=q}^{k^*+1}p_i^{\binom{k^*+1}{i}} \Big)^{n-m}. \notag 
\end{align*}
It follows from \eqref{e:new.cond2} and the criticality condition \eqref{e:crit.dim} that 
\begin{align*}
  &\Big( 1-\prod_{i=q+1}^{k^*+1}p_i^{\binom{k^*+1}{i}} \Big)^{m-(k^*+1)}
   \Big( 1- \prod_{i=q}^{k^*+1}p_i^{\binom{k^*+1}{i}} \Big)^{n-m}\\
  \ge &\Big( 1-n^{-\sum_{i=q+1}^{k^*+1}\binom{k^*+1}{i} \alpha_i} \Big)^m
        \Big( 1-n^{-\sum_{i=q}^{k^*+1}\binom{k^*+1}{i} \alpha_i} \Big)^n
\end{align*}
is bounded away from $0$ (by some constant, say, $\rho$). Thus, choosing $D>6/\rho$, one can obtain the following bound: 
\begin{align*}
&\frac12 {m\choose k^*+1} \prod_{i=q+1}^{k^*} p_i^{{k^*+1 \choose
    i+1}}\Big( 1-\prod_{i=q+1}^{k^*+1}p_i^{\binom{k^*+1}{i}}\Big)^{m-(k^*+1)}
    \Big( 1- \prod_{i=q}^{k^*+1}p_i^{\binom{k^*+1}{i}} \Big)^{n-m} \\
&\ge \frac{\rho m^{k^*+1}}{3(k^*+1)!} \, \prod_{i=q+1}^{k^*} p_i^{{k^*+1
  \choose i+1}} 
 \geq  \frac{D\rho (1+\vep)}{3(k^*+1)!}\, n^{\tau_{k^*}}
        >2(1+\vep)\frac{n^{\tau_{k^*}}}{(k^*+1)!}. 
\end{align*}
Now, combining all of these results  gives us 
\begin{align*}
\P\left( F_{k^*,n}\ge 2(1+\vep) \frac{n^{\tau_{k^*}}}{(k^*+1)!}\right) &\ge  \frac{\rho}{4}     \prod_{i=q+1}^{k^*} p_i^{{k^*+1 \choose i+1}} \P(A) =  \frac{\rho}{4}     \prod_{i=q+1}^{k^*} p_i^{{k^*+1 \choose i+1}} p_q^{\binom{m}{q+1}} \\
&=\frac{\rho}{4} \exp\left\{ -\left(\sum_{i=q+1}^{k^*} {k^*+1 \choose i+1}\alpha_i + {m\choose q+1}\alpha_q\right) \log n\right\} \\
&\geq \frac{\rho}{4} \exp\left\{ - C(\vep)  n^{q+1-\binom{k^*}{q}\alpha_q}\log n\right\}, 
\end{align*}
for some constant $ C(\vep)>0$.

Note that the indicator function of each free $k^*$-simplex is an
independent $k^*$-dimensional cocycle. If we denote by $\gamma_{k^*,n}$
the rank of the group of  $k^*$-dimensional cocycles in $K(n; \bp)$, then we have proved that for some constant $ C(\vep)>0$, 
\begin{equation} \label{e:cocycle.rank}
\P\left( \gamma_{k^*,n}\ge 2(1+\vep) \frac{n^{\tau_{k^*}}}{(k^*+1)!}\right) \geq \frac{\rho}{4} \exp\left\{ - C(\vep) n^{q+1-\binom{k^*}{q}\alpha_q}\log n\right\}. 
\end{equation}
In order to  deduce the lower bound in \eqref{e:LD.betti} from 
\eqref{e:cocycle.rank}, we only need to show that the rank of the group
of  $k^*$-dimensional coboundaries in $K(n; \bp)$ exceeds $(1+\vep) n^{\tau_{k^*}}/(k^*+1)!$ only on
an event
whose probability is of  smaller order than the probability in
\eqref{e:cocycle.rank}. The rank of the latter group does not exceed
the rank of the group of  $(k^*-1)$-dimensional cochains in $K(n; \bp)$, which is, of course, equal to the number $N_n(\sigma_{k^*-1})$ of
$(k^*-1)$-simplices in $K(n; \bp)$.  

We know from Proposition 3.1 in \cite{owada:samorodnitsky:thoppe:2021} that
$$
\E [ N_n(\sigma_{k^*-1})]\sim \frac{n^{\tau_{k^*-1}}}{k^*!} = o\bigl(
  n^{\tau_{k^*}}\bigr),  \ \ \ n\to\infty, 
  $$
by the criticality of the dimension $k^*$. It
  remains to notice that for any $\vep>0$,
  \begin{align*}
\P\bigl( N_n(\sigma_{k^*-1})\ge (1+\vep) \E [N_n(\sigma_{k^*-1})]\bigr) \leq \exp\bigl\{
    -Cn^{q+1-\binom{k^*-1}{q}\alpha_q}\bigr\}
  \end{align*}
  by the upper bound in \eqref{e:LDP.simplex}. The right-hand side above exhibits a smaller order
  than the probability in \eqref{e:cocycle.rank}, as required. 
\end{proof}



\end{document}